\documentclass[11pt,reqno]{amsart}
\usepackage{amssymb, amsmath, amsthm, mathrsfs, setspace, enumerate}
\usepackage[bookmarksnumbered, colorlinks, plainpages]{hyperref}
\usepackage{array}
\usepackage{tabularx}
\usepackage{tikz}
\usepackage{mathrsfs}
\newtheorem*{theoA}{Theorem A}
\newtheorem*{theoB}{Theorem B}

\newtheorem{theo}{Theorem}[section]
\newtheorem{lem}{Lemma}[section]
\newtheorem{cor}{Corollary}[section]

\newcommand{\ol}{\overline}

\newcommand{\be}{\begin{equation}}
\newcommand{\ee}{\end{equation}}
\newcommand{\beas}{\begin{eqnarray*}}
\newcommand{\eeas}{\end{eqnarray*}}
\newcommand{\bea}{\begin{eqnarray}}
\newcommand{\eea}{\end{eqnarray}}

\numberwithin{equation}{section}
\begin{document}

\title[M\MakeLowercase{eromorphic Functions and Linearization Phenomena in Partial Differential}......]{\LARGE 
M\Large \MakeLowercase{eromorphic Functions and Linearization Phenomena in Partial Differential Equations}}

\date{}
\author[S. M\MakeLowercase{ajumder}, D. P\MakeLowercase{ramanik and} J. B\MakeLowercase{anerjee}]{S\MakeLowercase{ujoy} M\MakeLowercase{ajumder}$^1$, D\MakeLowercase{ebabrata} P\MakeLowercase{ramanik }$^2$ \MakeLowercase{and}  J\MakeLowercase{hilik} B\MakeLowercase{anerjee}$^3$ $^{*}$}
\address{$^{1}$ Department of Mathematics, Raiganj University, Raiganj, West Bengal-733134, India.}
\email{sm05math@gmail.com, sjm@raiganjuniversity.ac.in}
\address{$^{2}$ Department of Mathematics, Raiganj University, Raiganj, West Bengal-733134, India.}
\email{debumath07@gmail.com}
\address{$^{3}$ Department of Mathematics, University of Kalyani, West Bengal 741235, India.}
\email{jhilikbanerjee38@gmail.com}

\renewcommand{\thefootnote}{}
\footnote{2020 \emph{Mathematics Subject Classification}: 32A20, 32A22, 32H30.}
\footnote{\emph{Key words and phrases}: Entire and Meromorphic functions in $\mathbb{C}^n$, Solutions of nonlinear partial differential equations, Nevanlinna theory in several complex variables.}
\footnote{*\emph{Corresponding Author}: Jhilik Banerjee.}

\renewcommand{\thefootnote}{\arabic{footnote}}
\setcounter{footnote}{0}

\begin{abstract} 
In this paper, we investigate meromorphic solutions of certain nonlinear partial differential equations in several complex variables involving differential and functional operators. Let $f$ be a non-constant meromorphic function in $\mathbb{C}$, $g$ an entire function in $\mathbb{C}^n$, and $h(z)=f(z_1+z_2+\ldots+z_n)$. We study the equations
\begin{align*}
\frac{\partial h(z)}{\partial z_i}=a G^g_{h}(z)+bh(z)+c\;\;\text{and}\;\;\frac{\partial h(z)}{\partial z_i}=a(z)G^g_{h}(z)+b(z)h(z)+c(z),
\end{align*}
where $z\in\mathbb{C}^n$, $i\in\{1,2,\ldots,n\}$, $a(\neq 0), b, c\in\mathbb{C}$ or $a(z)(\not\equiv 0), b(z),c(z)$ are polynomials in $\mathbb{C}^n$, and $G^g_h(z)=h(g(z),g(z),\ldots,g(z))$.
The results obtained in the paper, extend previous studies on meromorphic solutions of functional-differential equations to the setting of several complex variables, and further illustrate the rigidity imposed by value distribution properties on nonlinear functional equations.
\end{abstract}

\thanks{Typeset by \AmS -\LaTeX}
\maketitle

\section{{\bf Introduction and main results}}

Functional-differential equations involving compositions, derivatives, and analytic functions arise naturally in many areas of Complex Analysis and differential equations. Equations of the form
\begin{align}\label{I1.1}
f'(x)=af(g(x))
\end{align}
and its generalization
\begin{align}\label{I1.2}
f'(x)=af(g(x))+bf(x)+c
\end{align}
where $a\neq 0$, while $b$ and $c$ are constants or, more generally, functions combine both differential and functional behaviors, making their analysis significantly more complicated than ordinary differential equations. Such equations have attracted considerable attention because they connect the growth, value distribution, and structural properties of analytic functions with the dynamics generated by the function $g$. Functional-differential equations given by (\ref{I1.2}) arise naturally in several branches of mathematics and applied sciences. Such equations appear in the theory of boundary value problems for hyperbolic partial differential equations and include many important special cases. One notable example is the pantograph equation
\begin{align*}
f'(x)=af(\alpha x)+bf(x),
\end{align*}
where $a$, $b$, and $\alpha$ are constants. Pantograph-type equations have numerous applications, including models of cell growth, current collection systems for electric locomotives, and wavelet theory (see, for example, \cite{O-T-1971}, \cite{W-C-K-B-2000}, and the references therein), and they have been extensively studied for both real and complex variables by many authors.

Another important example occurs when
\begin{align*}
g(x)=x-k,
\end{align*}
with $k>0$ fixed, equation (\ref{I1.1}) reduces to the well-known and extensively studied linear differential-difference (or delay-differential) equation (see \cite{B-C-1963}). In \cite{U-1965}, Utz posed the problem of existence for equation (\ref{I1.1}). Siu \cite{S-1965} established global existence and uniqueness results for (\ref{I1.1}) under certain conditions on the function g and the constant a. Local existence and uniqueness problems for more general equations were later investigated by Anderson \cite{A-1966}, and by Oberg in \cite{O-1969} for local real solutions and in \cite{O-1971} for local complex solutions. Related equations in the complex domain were also studied in \cite{B-1997}, \cite{BMW-2004}, \cite{D-1995}, \cite{D-I-1997}, \cite{G-1967}, \cite{G-Y-1973}, and related works.

It was shown in \cite{G-1967} that if $f$ is a non-constant entire function and $g$ is an entire function on the complex plane $\mathbb{C}$ satisfying equation (\ref{I1.1}), with a constant, then $g$ must be linear. This conclusion was recently extended in \cite{BMW-2004} to equations of the form
\begin{align*}
f'=af(g)+bf,
\end{align*}
where $a\neq 0$ and $b$ is constant; in this case the equations reduce to those of pantograph type. However, no analogous characterization is currently known when $f$ is meromorphic in $\mathbb{C}$. It is known only from \cite{G-Y-1973} that $g$ must be a polynomial whenever $f$ is a transcendental meromorphic function in $\mathbb{C}$. 
Despite the extensive literature on these equations, and despite occasional overlap among existing results, several important questions remain unresolved. In 2007, Li and Saleeby \cite{Li-Saleeby-2007} proved that $g$ must be linear in equation (\ref{I1.2}) when $f$ is a transcendental meromorphic function in $\mathbb{C}$ and obtained the following results.

\begin{theoA}\cite[Theorem 2.1]{Li-Saleeby-2007} Suppose that $f$ is a nonconstant meromorphic function in $\mathbb{C}$ and $g$ is an entire function in $\mathbb{C}$ satisfying the equation
\begin{align*}
f'(z)=af(g(z))+bf(z)+c,
\end{align*}
where $a \neq 0$ and $b, c$ are constants. Then
\begin{enumerate}
\item[(i)] $g$ must be linear if $f$ is transcendental;
\item[(ii)] $g$ must be a polynomial of degree less than or equal to $2$ if $f$ is rational. Furthermore, $\deg g = 2$ if and only if
\begin{align*}
f(z)=\frac{\alpha}{z-w_0}+\beta,\quad \text{and}\quad
g(z)=w_0-a(z-w_0)^2,
\end{align*}
with $b=a\beta+c=0$, where $\alpha \neq 0$, $\beta$, and $w_0$ are complex numbers.
\end{enumerate}
\end{theoA}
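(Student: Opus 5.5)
We need to prove Theorem A: $f' = af(g)+bf+c$ with $f$ meromorphic and nonconstant and $g$ entire. (i) If $f$ is transcendental, then $g$ is linear. (ii) If $f$ is rational, then $\deg g\le 2$, with the stated characterization of the case $\deg g=2$.

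\textbf{Plan for (i).} First I show that $g$ is a polynomial, using Nevanlinna theory for compositions. From the equation,
\[
af(g)=f'-bf-c,
\]
so $T(r,f(g))\le T(r,f')+T(r,f)+O(1)\le 3T(r,f)+S(r,f)$. If $g$ were transcendental, a Clunie-type growth lemma would give $T(r,f)/T(r,f(g))\to 0$ outside an exceptional set, since $f$ is transcendental meromorphic and $g$ is transcendental entire. This is a contradiction, so $g$ is a polynomial, say of degree $d$.

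Suppose $d\ge 2$. Comparing characteristics, $T(r,f(g))\sim d\,T(r^d,f)$ up to constants, because $M(r,g)\asymp r^d$. Combined with $T(r,f(g))\le 3T(r,f)+S(r,f)$ this gives
\[
T(r^d,f)\le C\,T(r,f)
\]
for most $r$, which forces $f$ to have finite order. The bound by itself is not contradictory for slowly growing $f$, so I add a pole-counting argument.

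\textbf{Pole counting.} At a pole $w$ of $f$ of order $p$, the right side $af(g)+bf+c$ has a pole of order $p+1$ coming from $f'$. Consider the cases:
\begin{itemize}
\item If $f$ has infinitely many poles, take a pole $w$ of $f$ that is not a pole of $f(g)$ at the same point. Then $g(w)$ must be a pole of $f$, since a pole of $f'$ of order $p+1$ cannot be cancelled by $bf$ alone. At a point $\zeta$ with $g(\zeta)=w_0$, a pole of order $q$, the term $f(g)$ has a pole of order $q\cdot m_\zeta$, where $m_\zeta$ is the multiplicity of $g-w_0$ at $\zeta$. The orders must then satisfy $p+1=q\cdot m_\zeta$ for the poles to balance. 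Following the orbit $w\mapsto g(w)$ produces an infinite chain of poles with strictly controlled orders.
\item If $f$ has finitely many poles, then $f=P/Q\cdot e^{h}$ with $h$ a polynomial by finite order. Substituting and comparing the exponential growth of $e^{h(g)}$ against $e^{h}$ (degree $\deg h\cdot d$ versus $\deg h$) gives the contradiction directly.
\end{itemize}

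In the infinitely-many-poles case I would compare counting functions: every pole of $f(g)$ lies at a pole of $f$, so
\[
N(r,f(g))\le N(r,f)+O(\log r).
\]
On the other hand $N(r,f(g))\ge d\,N(r^{d}(1-o(1)),f)$ when $f$ has infinitely many poles in a suitable set. Iterating this gives growth faster than any finite order, contradicting finite order. I expect the main obstacle to be this counting step, specifically making $N(r,f(g))\ge c\,d\,N(\lambda r^d,f)$ rigorous and handling poles of $f$ that are cancelled.

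\textbf{Plan for (ii).} Let $f$ be rational. Comparing degrees as rational functions at infinity gives $\deg f(g)=\deg f\cdot\deg g$. Since $f(g)$ is a combination of $f'$, $f$ and $1$, its degree is at most $\deg f+1$, because $f'$ has pole orders increased by one. Hence $d\deg f\le \deg f+1$, which forces $d\le 2$, with $d=2$ only when $\deg f=1$.

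When $d=2$ and $\deg f=1$, write $f=\alpha/(z-w_0)+\beta$. Substituting gives
\[
-\frac{\alpha}{(z-w_0)^2}=\frac{a\alpha}{g-w_0}+a\beta+\frac{b\alpha}{z-w_0}+b\beta+c.
\]
Matching the principal parts yields $b=0$, $a\beta+c=0$ and $g-w_0=-a(z-w_0)^2$. The converse follows by direct verification.
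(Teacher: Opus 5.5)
Your handling of transcendental $g$ (Clunie's lemma against $T(r,f(g))\le 3T(r,f)+S(r,f)$) is the paper's Case 1, and your final computation with $f=\beta+\alpha/(z-w_0)$ matches its Sub-case 2.2. The steps in between, however, have genuine gaps.

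\textbf{Part (i), $g$ a polynomial of degree $d\ge 2$.}

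\emph{Finitely many poles.} Finite order does \emph{not} give $f=(P/Q)e^{h}$. It only gives $f=F/Q$ with $F$ entire of finite order, and $F$ may have infinitely many zeros, so the ``exponential growth comparison'' has nothing to act on. The missing idea is the one the paper uses in Sub-case 2.1. Without poles, the equation gives the sharper bound
$T(r,f(g))=m(r,f(g))+O(\log r)\le m(r,f)+m(r,f'/f)+O(\log r)\le T(r,f)+S(r,f)$,
with coefficient $1$. On the other hand, Lemma \ref{L1.1} and convexity of $T(r,f)$ in $\log r$ give $T(r,f(g))\ge (1-o(1))T(r^d,f)\ge (d-o(1))T(r,f)$. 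Together these force $d\le 1$.

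\emph{Infinitely many poles.} Your inequality $N(r,f(g))\le N(r,f)+O(\log r)$ is false. By the equation, a pole of $f$ of order $p$ is a pole of $f(g)$ of order $p+1$, so $N(r,f(g))=N(r,f)+\overline{N}(r,f)$. Using the correct bound $N(r,f(g))\le 2N(r,f)$ together with $N(r,f(g))\ge N(cr^d,f)-O(\log r)\ge (d-o(1))N(r,f)$, you only get $d\le 2$. This is the same $d\le 2$ the paper extracts from $T(r,f(g))\le 2T(r,f)+S(r,f)$, so the critical case $d=2$ survives. Chasing forward orbits $w\mapsto g(w)$ gives no contradiction either, since such orbits may simply escape to $\infty$.

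What is needed is to count the pole \emph{set}. The equation gives $g^{-1}(P)=P$ for the pole set $P$ of $f$. With reduced counting functions this yields
$\overline{N}(r,f)=\overline{N}(r,f(g))\ge \overline{N}(cr^d,f)-O(\log r)\ge (d-o(1))\overline{N}(r,f)-O(\log r)$,
which forces finitely many poles. The paper instead argues locally that $f$ has at most one pole, using that a quadratic is totally ramified over only one point.

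Two smaller points: $T(r,f(g))$ is comparable to $T(r^d,f)$, not to $d\,T(r^d,f)$ (take $f=e^z$, $g=z^2$), and the detour through finite order is unnecessary.

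\textbf{Part (ii), $f$ rational.}

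The inequality $\deg(f'-bf-c)\le \deg f+1$ is wrong, because \emph{each} finite pole has its order raised by one. The correct bound is $\deg(f'-bf-c)\le \deg f+s$, where $s$ is the number of distinct finite poles. For example, $f=1/z+1/(z-1)$ has degree $2$ while $f'$ has degree $4$.

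The correct comparison $d\deg f\le \deg f+s\le 2\deg f$ still yields $d\le 2$. But $d=2$ is then compatible with any $f$ whose poles are all finite and simple, so $\deg f=1$ does not follow.

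The pole set closes this gap as well. Here $P$ is finite with $g^{-1}(P)=P$, and $\deg g=2$ forces every point of $P$ to have a single preimage, i.e.\ to be the unique critical value of $g$. Also $P\neq\emptyset$, since otherwise $f$ is a polynomial and $d\le 1$. Hence $P=\{w_0\}$ and $g-w_0=\tilde a(z-w_0)^2$. Then $\deg f\le s=1$ gives $f=\beta+\alpha/(z-w_0)$, and your computation of $b=0$, $a\beta+c=0$, $g-w_0=-a(z-w_0)^2$ goes through.

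Finally, your degree count silently assumes $g$ is a polynomial. You must rule out transcendental $g$ when $f$ is rational: in that case $f(g)$ is transcendental while $f'-bf-c$ is rational, which is a contradiction.
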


\begin{theoB}\cite[Theorem 2.4]{Li-Saleeby-2007} Suppose that $f$ is a transcendental meromorphic function in $\mathbb{C}$ and $g$ is an entire function in $\mathbb{C}$ satisfying the equation
\begin{align*}
f'(z)=a(z)f(g(z))+b(z)f(z)+c(z),
\end{align*}
where $a\not\equiv 0$, $b$, $c$ are polynomials in $\mathbb{C}$. Then $g$ must be linear.
\end{theoB}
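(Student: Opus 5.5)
The statement to prove is Theorem B: $f$ is transcendental meromorphic, $g$ is entire, and
\[
f'=a(z)f(g)+b(z)f+c(z)
\]
with polynomial coefficients and $a\not\equiv 0$; we must show $g$ is linear. The plan is a Nevanlinna-theoretic growth comparison, organized in three steps.

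\emph{Step 1: $g$ is a polynomial.} Solve the equation for the composition:
\[
f(g)=\frac{f'-bf-c}{a}.
\]
By the lemma of the logarithmic derivative, and because $a,b,c$ are small functions (polynomials, with $T(r,\cdot)=O(\log r)$),
\[
T(r,f(g))\le T(r,f')+T(r,f)+O(\log r)\le 3T(r,f)+S(r,f).
\]
Suppose $g$ were transcendental. Clunie's theorem gives $T(r,f)=o(T(r,f\circ g))$ as $r\to\infty$ (a sequence suffices, and is compatible with the exceptional set of $S(r,f)$). This contradicts the bound above, so $g$ is a polynomial, say of degree $d$.

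\emph{Step 2: rule out $d\ge 2$ via poles.} Assume $d\ge 2$. We track multiplicities of poles, using counting functions and a chain argument.

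First suppose $f$ has infinitely many poles. Let $z_0$ be a pole of $f$ of order $m$ with $|z_0|$ large, so $z_0$ avoids the zeros of $a$ and $b$.
\begin{itemize}
\item The left side $f'$ has a pole of order $m+1$ at $z_0$, while $bf+c$ has a pole of order at most $m$.
\item Hence $f(g)$ must have a pole of order exactly $m+1$ at $z_0$. So $w_0=g(z_0)$ is a pole of $f$ of order $p$, and $p\cdot e=m+1$, where $e$ is the local multiplicity of $g$ at $z_0$.
\item Away from the finitely many critical points of $g$ we have $e=1$, so $w_0$ is a pole of order $m+1$.
\end{itemize}
Iterating gives a chain $z_0,g(z_0),g^2(z_0),\dots$ of poles whose orders increase by $1$ at each step. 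Since $|g(z)|\sim c|z|^d$ for large $|z|$, the moduli grow like $|z_0|^{d^k}$.

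To turn this into a contradiction, compare counting functions. Every pole of $f$ of large modulus (outside a finite set) is the $g$-image of a pole of $f$... is the wrong direction; instead use $N(r,f(g))$ directly:
\[
N(r,f(g))\ge d\,N(r^{d}(1-o(1))/C,f)-O(\log r),
\]
since each pole $w$ of $f$ has $d$ preimages under $g$, counted with multiplicity. On the other hand, from the equation,
\[
N(r,f(g))\le N(r,f')+O(\log r)\le 2N(r,f)+O(\log r).
\]
Together these give $N(r^{d}/C,f)\le \tfrac{2}{d}N(r,f)+O(\log r)$. Combined with the superlinear growth $N(r,f)/\log r\to\infty$ (which holds when there are infinitely many poles), iterating in $r$ forces $N(r,f)$ to be bounded by $O(\log r)$, a contradiction.

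\emph{Step 3: the case of finitely many poles.} If $f$ has finitely many poles, write $f=P e^{h}/Q$ form, or more generally use Nevanlinna's characteristic with $m(r,\cdot)$. Here the substitute argument is Pólya/Edrei–Fuchs type growth comparison:
\begin{itemize}
\item For $\deg g\ge 2$, the maximum modulus satisfies $\log M(r,f\circ g)\ge \log M(cr^{d},f)$ up to constants.
\item The equation bounds $M(r,f\circ g)$ by $r^{O(1)}M(r,f')+r^{O(1)}M(r,f)$, which is at most $M(2r,f)^{2}$ by Cauchy's estimates.
\item This yields $\log M(cr^{d},f)\le 2\log M(2r,f)+O(\log r)$. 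Iterating this inequality forces $\log M(r,f)=O((\log r)^{K})$, and then $f$ has finite order zero.
\end{itemize}
For such functions, the Wiman–Valiron estimate $f'/f\sim \nu(r)/z$ near maximum points gives a contradiction with $f(g)\approx f'/a$. Namely, $\log M(r^d,f)$ grows much faster than $\log(\nu M(r,f)/r)$ once $f$ is transcendental, since $\log M(r,f)/\log r\to\infty$.

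The main obstacle I expect is Step 2/3: making the iterated growth inequalities rigorous despite exceptional sets and the slowly-growing cases. My first choice is to use the maximum-modulus route for the entire part, and the counting-function inequality with Hayman's lemma on removing exceptional sets for the poles.
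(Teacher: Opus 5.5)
\textbf{The gap is in Step~2, and it sits exactly at the decisive case $d=2$.} Step~1 matches the paper's argument (Clunie's lemma against a linear bound $T(r,f\circ g)\le K\,T(r,f)+S(r,f)$). Step~2, however, fails as written.

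Your inequality $N(r,f(g))\ge d\,N(r^d/C,f)-O(\log r)$ is false. The fact that each pole of $f$ has $d$ preimages gives $n(r,f(g))\ge d\,n(cr^d,f)-O(1)$ for the \emph{unintegrated} counting function. When you integrate against $dt/t$, the substitution $s=ct^d$ has $ds/s=d\,dt/t$, and this absorbs the factor $d$. What you actually get is $N(r,f(g))\ge N(cr^d,f)-O(\log r)$. For instance, $g(z)=z^d$ gives $N(r,f\circ g)=N(r^d,f)$ exactly; compare Lemma~\ref{L1.1}, which also carries no factor $m$.

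With the correct inequality, your two bounds give $N(cr^d,f)\le 2N(r,f)+O(\log r)$. Convexity of $N(r,f)$ in $\log r$ gives $N(cr^d,f)\ge (d-o(1))N(r,f)$. Together these exclude only $d\ge 3$. For $d=2$, the upper bound $2N(r,f)+O(\log r)$ and the lower bound $(2-o(1))N(r,f)$ are compatible, and nothing follows. This is the same borderline the paper reaches in (\ref{EqT1.7}), where growth comparison yields only $m\le 2$. The paper handles $m=2$ separately: a local analysis at the poles shows $f$ has at most one pole. Then, for the entire factor, the sharper bound $T(r,h_2\circ g)=m(r,h_2\circ g)\le T(r,h_2)+S(r,h_2)$ contradicts the lower bound $(2-o(1))T(r,h_2)$ coming from Lemma~\ref{L1.1} and convexity.

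\textbf{A repair within your framework.} Count \emph{distinct} poles, and use the backward direction you set aside, in the strong form that every $g$-preimage of a pole is a pole. From $af(g)=f'-bf-c$: if $g(z)$ is a pole of $f$ and $a(z)\ne 0$, then $z$ is a pole of $f$. Discard the finitely many poles $w$ that are critical values of $g$ or images of zeros of $a$. For every other pole $w$, its $d$ distinct preimages are poles of $f$. They lie in $|z|<r$ whenever $|w|<cr^d$, where $c$ is chosen so that $|g(z)|\ge c|z|^d$ for large $|z|$. Hence
\[
\overline{n}(r,f)\ge d\,\overline{n}(cr^d,f)-O(1)\ge d\,\overline{n}(r,f)-O(1),
\]
so $f$ has only finitely many poles when $d\ge 2$. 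The infinitely-many-poles case is then vacuous, and everything reduces to Step~3.

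\textbf{A simplification in Step~3.} Your squared bound $M(2r,f)^2$ recreates the same $d=2$ borderline and forces the Wiman--Valiron detour. It is simpler to write $f=F/Q$ and use the Cauchy estimate $M(r,F')\le M(2r,F)/r$. This gives $\log M(cr^d,F)\le \log M(2r,F)+O(\log r)$. Convexity of $\log M(r,F)$ in $\log r$ then yields $(d-1-o(1))\log M(2r,F)=O(\log r)$, contradicting the transcendence of $F$.

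With these repairs, your route differs from the paper's by replacing its local pole analysis with pole-preimage counting, and its Nevanlinna characteristic of the entire factor with the maximum modulus.
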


We define $\mathbb{Z}_+=\mathbb{Z}[0,+\infty)=\{n\in \mathbb{Z}: 0\leq n<+\infty\}$ and $\mathbb{Z}^+=\mathbb{Z}(0,+\infty)=\{n\in \mathbb{Z}: 0<n<+\infty\}$. Let $f$ be a meromorphic function on $\mathbb{C}^n$. 
We define
\begin{align*}
\displaystyle \partial_{z_i}(f(z))=\frac{\partial f(z)}{\partial z_i},\ldots,\partial^{l_i}_{z_i}(f(z))=\frac{\partial^{l_i} f(z)}{\partial z_i^{l_i}},
\end{align*}
where $l_i\in \mathbb{Z}^n_+$ and $i=1,2,\ldots,n$ and  
\begin{align*}
\displaystyle \partial^{I}(f(z))=\frac{\partial^{|I|}f(z)}{\partial z_1^{i_1}\cdots \partial z_m^{i_n}},
\end{align*}
where $I=(i_1,\ldots,i_m)\in\mathbb{Z}^n_+$ is a multi-index such that $|I|=\sum_{j=1}^n i_j$.

\medskip
Let $f:\mathbb{C}^n\to\mathbb{P}^1$ and $g:\mathbb{C}^n\to \mathbb{P}^1$ be two non-constant meromorphic functions, where $\mathbb{P}^1=\mathbb{C}\cup\{\infty\}$. Throughout the paper, we consider the meromorphic function $G^g_f:\mathbb{C}^n\to \mathbb{P}^1$ given by
\begin{align}\label{eq}
G^g_f(z)=f(g(z),g(z),\ldots,g(z)).
\end{align}

In the setting of one complex variable, many classical results describe how the nature of a meromorphic solution $f$ restricts the possible form of the entire function $g$. However, extending these problems to the framework of several complex variables introduces new analytical difficulties and richer geometric structures. Functions of several complex variables possess properties that differ fundamentally from those in one variable, particularly with respect to holomorphic extension, partial differentiation, and multidimensional complex geometry. Consequently extending such problems from one complex variable to several complex variables introduces additional challenges because holomorphic functions in $\mathbb{C}^n$ possess richer geometric and analytic structures than those in the one-dimensional setting.

\medskip
Let
\begin{align*}
h(z)=f(z_1+z_2+\ldots+z_n),\quad z=(z_1,z_2,\ldots,z_n)\in\mathbb{C}^n,
\end{align*}
where $f$ is a meromorphic function in the complex plane and g is an entire function in $\mathbb{C}^n$. Since $h$ depends only on the sum $z_1+z_2+\ldots+z_n$, the equation
\begin{align}\label{Eq}
\frac{\partial h(z)}{\partial z_i}=a G^g_{h}(z)+bh(z)+c,
\end{align}
connects the growth and differential properties of $f$ with the geometric structure of $g$, where $i\in\{1,2,\ldots,n\}$, $a\neq 0, b,c$ are constants in $\mathbb{C}$. 

The investigation of such equations is motivated by classical results on meromorphic solutions of differential equations. In Nevanlinna theory, one seeks relations between the growth of meromorphic functions and the algebraic form of the equations they satisfy. Many classical theorems show that transcendental meromorphic functions cannot satisfy overly restrictive algebraic differential relations unless the coefficients involved are very simple. Consequently, rigidity phenomena often occur: the auxiliary functions must reduce to polynomials of low degree or even linear functions.
Now we state our results.

\begin{theo}\label{t1.1} Suppose that $f$ is a non-constant meromorphic function in $\mathbb{C}$ and $g$ is an entire function in $\mathbb{C}^n$ satisfying the equation (\ref{Eq}),
where $z\in\mathbb{C}^n$, $i\in\{1,2,\ldots,n\}$, $a\neq 0, b,c$ are constants in $\mathbb{C}$, $h(z)=f(z_1+z_2+\ldots+z_n)$ and $G^g_h(z)$ is defined by (\ref{eq}). Then 
\begin{enumerate}
\item[(i)] $g$ must be linear in $\mathbb{C}^n$, if $f$ is transcendental;
\item[(ii)] $g$ must be a polynomial in $\mathbb{C}^n$ of degree less than or equal to $2$, if $f$ is a rational function in $\mathbb{C}$. Furthermore the degree of $g$ is $2$ if and only if 
\begin{align*}
f(z)=\beta+\frac{\alpha}{z-w_0},\quad ng(z)=w_0-a\left(z_1+\ldots+z_n-w_0\right)^2
\end{align*}
and $b=a\beta+c=0$, where $\alpha\neq 0, \beta, w_0$ are complex numbers.
\end{enumerate}
\end{theo}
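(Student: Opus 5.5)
Set $s=z_1+\cdots+z_n$. Then $h(z)=f(s)$, $\partial h/\partial z_i=f'(s)$, and $G^g_h(z)=h(g(z),\ldots,g(z))=f(ng(z))$. Equation (\ref{Eq}) therefore reads
\begin{align*}
f'(z_1+\cdots+z_n)=a f(ng(z))+bf(z_1+\cdots+z_n)+c .
\end{align*}
The plan is to reduce to Theorem A by showing that $ng$ depends only on $s$.

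\textbf{Step 1: $g$ depends only on $s$.} Fix a point $z^0$ and move along the hyperplane $\{s=s_0\}$, say $z=z^0+t v$ with $v_1+\cdots+v_n=0$. The left side and $bf(s)+c$ are constant in $t$, so $f(ng(z^0+tv))$ is constant in $t$. Since $f$ is non-constant, the entire function $t\mapsto ng(z^0+tv)$ takes values in a level set $f^{-1}(w)$, which is discrete. A connected image in a discrete set is a single point, so this function is constant (one has to handle points where $f(ng)=\infty$ separately, but the same argument works with $1/f$). Hence $g$ is constant on every hyperplane $s=s_0$, and we may write $ng(z)=\tilde g(s)$ with $\tilde g$ entire in one variable; for example $\tilde g(\zeta)=ng(\zeta,0,\ldots,0)$.

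\textbf{Step 2: apply Theorem A.} We now have $f'(\zeta)=af(\tilde g(\zeta))+bf(\zeta)+c$ for all $\zeta\in\mathbb{C}$, because every $\zeta$ equals $s$ for some $z$. Theorem A then gives the following.
\begin{enumerate}
\item[(i)] If $f$ is transcendental, $\tilde g$ is linear, so $g(z)=\frac1n\tilde g(z_1+\cdots+z_n)$ is linear in $\mathbb{C}^n$.
\item[(ii)] If $f$ is rational, $\tilde g$ is a polynomial of degree at most $2$, so $g$ is a polynomial in $\mathbb{C}^n$ of degree at most $2$. Moreover $\deg g=2$ if and only if $\deg\tilde g=2$, which happens if and only if $f=\beta+\alpha/(\zeta-w_0)$, $\tilde g(\zeta)=w_0-a(\zeta-w_0)^2$ and $b=a\beta+c=0$. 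This is exactly $ng(z)=w_0-a(z_1+\cdots+z_n-w_0)^2$.
\end{enumerate}

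\textbf{Step 3: the converse in (ii).} Given this $f$ and $g$, the equation for $h$ reduces through $s$ to the one-variable identity, which Theorem A asserts holds (or which can be checked directly).

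\textbf{Main obstacle.} The only delicate point is Step 1, specifically making the level-set/discreteness argument rigorous near poles of $f\circ(ng)$. Working on the dense open set where $f(ng(z))$ is finite and then extending $g$ by continuity should suffice.
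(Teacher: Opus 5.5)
Your proof is correct, and it takes a genuinely different and much shorter route than the paper's. The paper never observes that $g$ must be a function of $s=z_1+\cdots+z_n$. Instead it works directly in Nevanlinna theory on $\mathbb{C}^n$, in parallel with the one-variable proof. The logarithmic derivative lemma gives $T(r,G^g_h)\le 2T(r,h)+o(T(r,h))$. A several-variable Clunie theorem (Lemma~\ref{Lm2}, obtained by restricting $g$ to a generic line through the origin) excludes transcendental $g$. The lower bound $T(r,f(g))\ge(1-o(1))T(r^m,f)$ of Lemma~\ref{L1.1}, together with convexity of $T(r,h)$ in $\log r$, forces $\deg g\le 2$. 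Finally, a pole analysis ($f$ has at most one pole, of order one, and its entire factor is a polynomial of degree at most one) settles the case $\deg g=2$; there the shape $ng-w_0=\tilde a(s-w_0)^2$ is read off from the vanishing order along the hyperplane $\{s=w_0\}$. Your Step~1 makes all of this unnecessary. Once $ng=\tilde g(s)$, restricting to the line $\{(\zeta,0,\ldots,0)\}$ turns (\ref{Eq}) into exactly the equation of Theorem~A, and $\deg g=\deg\tilde g$ transfers every conclusion, including the converse. Your route buys brevity and a transparent reason why the problem is really one-dimensional, and it avoids comparing the growth of $T(r,h)$ on $\mathbb{C}^n$ with that of $T(r,f)$ on $\mathbb{C}$. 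Its price is that Theorem~A is used as a black box and the reduction is tied to constant coefficients. The paper's lemmas are meant to carry over to Theorem~\ref{t1.2}, where $a(z),b(z),c(z)$ depend on all of $z$ and differentiating along $\{s=\text{const}\}$ no longer annihilates the other terms. That same differentiation is the cleanest way to write your Step~1 without the pole bookkeeping you flag. Applying $\partial_{z_j}-\partial_{z_k}$ to the equation leaves $a\,n\,f'(ng)\,(\partial_{z_j}g-\partial_{z_k}g)\equiv 0$ in the field of meromorphic functions on $\mathbb{C}^n$. If $f'(ng)\equiv 0$, the entire function $ng$ takes values in the discrete zero set of $f'$ and is therefore constant. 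So in every case $\partial_{z_j}g\equiv\partial_{z_k}g$ for all $j,k$, i.e.\ $g$ depends only on $s$.
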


\begin{theo}\label{t1.2} Suppose that $f$ is a transcendental meromorphic function in $\mathbb{C}$ and $g$ is an entire function in $\mathbb{C}^n$ satisfying the following equation
\begin{align}\label{Eq1}
\frac{\partial h(z)}{\partial z_i}=a(z) G^g_{h}(z)+b(z)h(z)+c(z),
\end{align}
where $z\in\mathbb{C}^n$, $i\in\{1,2,\ldots,n\}$, $a\not\equiv  0, b,c$ are polynomials in $\mathbb{C}^n$, $h(z)=f(z_1+z_2+\ldots+z_n)$ and $G^g_h(z)$ is defined by (\ref{eq}). Then $g$ must be a linear.
\end{theo}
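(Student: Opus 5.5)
Writing $s=z_1+\cdots+z_n$, we have $\partial h/\partial z_i=f'(s)$ and $G^g_h(z)=h(g(z),\ldots,g(z))=f(ng(z))$. So (\ref{Eq1}) becomes the mixed one/several-variable identity
\begin{align*}
f'(z_1+\cdots+z_n)=a(z)\,f(ng(z))+b(z)\,f(z_1+\cdots+z_n)+c(z),\qquad z\in\mathbb{C}^n .
\end{align*}
The plan is to restrict this identity to complex lines, apply Theorem B on each line, and then reassemble the conclusion into a statement about the Hessian of $g$.

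\emph{Restriction to a line.} Fix a direction $v\in\mathbb{C}^n$ with $\lambda:=v_1+\cdots+v_n\neq 0$ and a base point $u\in\mathbb{C}^n$. Put $\sigma=u_1+\cdots+u_n$ and $z=u+\zeta v$ with $\zeta\in\mathbb{C}$, so that $s=\lambda\zeta+\sigma$. Define
\begin{align*}
F(\zeta)=f(\lambda\zeta+\sigma),\qquad \tilde g(\zeta)=\frac{ng(u+\zeta v)-\sigma}{\lambda}.
\end{align*}
Then $F$ is transcendental meromorphic in $\mathbb{C}$ and $\tilde g$ is entire in $\mathbb{C}$. Moreover $F'(\zeta)=\lambda f'(s)$ and $f(ng(u+\zeta v))=F(\tilde g(\zeta))$. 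Multiplying the identity above by $\lambda$ gives
\begin{align*}
F'(\zeta)=\tilde a(\zeta)F(\tilde g(\zeta))+\tilde b(\zeta)F(\zeta)+\tilde c(\zeta),
\end{align*}
where $\tilde a(\zeta)=\lambda a(u+\zeta v)$, $\tilde b(\zeta)=\lambda b(u+\zeta v)$ and $\tilde c(\zeta)=\lambda c(u+\zeta v)$ are polynomials in $\zeta$.

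To apply Theorem B we need $\tilde a\not\equiv 0$. Since $a\not\equiv 0$ is a polynomial, the set of $u$ for which $a$ vanishes identically on the line $u+\mathbb{C}v$ is a proper algebraic subset of $\mathbb{C}^n$. For $u$ outside it, Theorem B says $\tilde g$ is linear (we read "linear" as degree $\le 1$, constants allowed). Hence $\zeta\mapsto g(u+\zeta v)$ is affine, and
\begin{align*}
\frac{d^2}{d\zeta^2}g(u+\zeta v)=\sum_{j,k}\frac{\partial^2 g}{\partial z_j\partial z_k}(u+\zeta v)\,v_jv_k=0 .
\end{align*}
This holds for a dense set of $u$, so by continuity $Q_v(z):=\sum_{j,k}g_{z_jz_k}(z)v_jv_k$ vanishes for every $z\in\mathbb{C}^n$.

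\emph{Reassembly.} For each fixed $z$, the map $v\mapsto Q_v(z)$ is a quadratic polynomial in $v$. It vanishes on the nonempty open set $\{v:\sum v_j\neq 0\}$, hence identically in $v$. By polarization, since the Hessian matrix is symmetric, all second partial derivatives $\partial^2g/\partial z_j\partial z_k$ vanish identically on $\mathbb{C}^n$. Therefore $g$ is an affine polynomial, i.e.\ linear in $\mathbb{C}^n$.

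\emph{Main obstacle.} The delicate step is the reduction to one variable. One must check that the rescaling by $\lambda$ turns the composed term $f(ng)$ exactly into $F\circ\tilde g$ with $\tilde g$ entire in one variable. One must also make sure the nondegeneracy $\tilde a\not\equiv0$ holds on enough lines. This is why directions with $\sum v_j=0$ are excluded and the argument is closed with a density/polynomial-identity step, rather than by working directly with the coordinate directions.
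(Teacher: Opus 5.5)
Your proof is correct, and it takes a genuinely different route from the paper's.

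\textbf{The paper's route.} The paper stays in $\mathbb{C}^n$ and reruns the Nevanlinna-theoretic proof of Theorem \ref{t1.1}, treating $a,b,c$ as small functions via $T(r,p)=O(\log r)=o(T(r,f))$. The logarithmic derivative lemma in $\mathbb{C}^n$ gives $T(r,G^g_h)\le 2T(r,h)+o(T(r,h))$ outside an exceptional set. Transcendental $g$ is then excluded by the composition lemma (Lemma \ref{Lm2}). Polynomial $g$ of degree $m\ge 2$ is excluded by the lower bound $T(r,G^g_h)\ge(1-o(1))T(r^m,h)$ of Lemma \ref{L1.1}, together with convexity of $T(r,h)$ in $\log r$ and, for $m=2$, the pole analysis of Case 2 in the proof of Theorem \ref{t1.1}.

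\textbf{Your route.} You exploit the special structure $h=f(z_1+\cdots+z_n)$ and $G^g_h=f(ng)$. On every line $u+\mathbb{C}v$ with $\sum v_j\neq 0$ and $a(u)\neq 0$, the equation becomes exactly the one-variable equation of Theorem B for $F(\zeta)=f(\lambda\zeta+\sigma)$ and $\tilde g$. So you apply Theorem B line by line, and then kill the Hessian of $g$ by a density argument in the base point and a polarization argument in the direction.

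\textbf{What each approach buys.} Your argument needs no several-variable Nevanlinna theory, no exceptional sets and no growth comparisons. It also inherits the full strength of the one-variable theorem. In particular it inherits the delicate exclusion of $\deg g=2$ for transcendental $f$ with polynomial coefficients. The paper handles that case only by referring back to the pole analysis of Theorem \ref{t1.1}, which is carried out there for constant coefficients. The price is that your proof depends entirely on Theorem B as a black box and on the special form of $h$. The paper's approach is, at least in intent, an intrinsically $n$-dimensional growth argument; it uses slicing along lines only inside Lemma \ref{Lm2}.

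Your reading of ``linear'' as ``degree at most one'' is the right one. Constant $g$ genuinely occurs: take $f(z)=e^z-1$, $g\equiv 0$ and $a=b=c=1$.
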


From Theorem \ref{t1.1}, we get the following corollaries.
\begin{cor}Suppose that $f$ is a non-constant meromorphic function in $\mathbb{C}$ and $g$ is an entire function in $\mathbb{C}^n$ satisfying the equation (\ref{Eq}). If $g$ is nonlinear, then the solution $f$ must be of the form
\begin{align*}
f(z)=\beta+\frac{\alpha}{z-w_0}
\end{align*}
for some constants $\alpha,\beta, w_0$.
\end{cor}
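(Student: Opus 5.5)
The corollary follows from Theorem \ref{t1.1} by contraposition and case analysis. Let $f$ be non-constant meromorphic in $\mathbb{C}$ and $g$ entire in $\mathbb{C}^n$ satisfy (\ref{Eq}), with $g$ nonlinear. We use a two-step plan: exclude transcendental $f$, then apply the degree-two characterization.

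\emph{Step 1: $f$ is rational.} If $f$ were transcendental, Theorem \ref{t1.1}(i) would force $g$ to be linear in $\mathbb{C}^n$, contradicting our assumption. Hence $f$ is a non-constant rational function.

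\emph{Step 2: $\deg g=2$.} By Theorem \ref{t1.1}(ii), $g$ is a polynomial in $\mathbb{C}^n$ of degree at most $2$. Since $g$ is nonlinear, it is not of degree $\le 1$, so $\deg g=2$.

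\emph{Step 3: the form of $f$.} Apply the ``only if'' direction of the characterization in Theorem \ref{t1.1}(ii). Since $\deg g=2$, we obtain $f(z)=\beta+\frac{\alpha}{z-w_0}$ with $\alpha\neq 0$, together with $ng(z)=w_0-a(z_1+\cdots+z_n-w_0)^2$ and $b=a\beta+c=0$. Discarding the extra information about $g$, $b$ and $c$ gives exactly the claimed form of $f$.

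The only point needing care is the meaning of ``linear''. In Theorem \ref{t1.1}, ``linear'' means a polynomial of degree at most one, so affine maps with a constant term are included. Read this way, ``nonlinear'' is equivalent to ``not a polynomial of degree $\le 1$''. Combined with the degree bound in (ii), it then forces $\deg g=2$. There is no genuine obstacle here, since all the analytic work is contained in Theorem \ref{t1.1}.
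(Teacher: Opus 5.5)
Your proposal is correct and is exactly the deduction the paper intends when it says the corollary follows from Theorem \ref{t1.1}: part (i) excludes transcendental $f$, the degree bound in part (ii) together with nonlinearity forces $\deg g=2$, and the ``only if'' half of the characterization then gives $f(z)=\beta+\frac{\alpha}{z-w_0}$. You are also right to read ``linear'' as ``polynomial of degree at most one,'' with constants included; this reading is needed, because for constant $g$ the equation reduces to $f'=bf+\text{const}$, which has transcendental solutions when $b\neq 0$.
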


\begin{cor}Suppose that $f$ is a meromorphic function in $\mathbb{C}$ and $g$ is an entire function in $\mathbb{C}^n$ satisfying the equation (\ref{Eq}). If $g$ is nonlinear and $b\neq 0$ then the solution $f$ must be a constant.
\end{cor}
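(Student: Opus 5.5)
The plan is to argue by contradiction and reduce everything to Theorem \ref{t1.1}. Suppose $f$ and $g$ satisfy (\ref{Eq}), $g$ is nonlinear, $b\neq 0$, and $f$ is non-constant. Then Theorem \ref{t1.1} applies.

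First, $f$ cannot be transcendental. Otherwise part (i) of Theorem \ref{t1.1} would force $g$ to be linear in $\mathbb{C}^n$, contrary to hypothesis. So $f$ is a non-constant rational function on $\mathbb{C}$.

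Part (ii) of Theorem \ref{t1.1} then says that $g$ is a polynomial in $\mathbb{C}^n$ of degree at most $2$. Here I read ``linear'' as ``polynomial of degree at most $1$'', the convention Theorem \ref{t1.1}(i) uses, so that a constant $g$ counts as linear. Under this reading, a nonlinear $g$ must have degree exactly $2$. The ``only if'' direction of part (ii) now gives
\begin{align*}
f(z)=\beta+\frac{\alpha}{z-w_0},\qquad ng(z)=w_0-a\left(z_1+\ldots+z_n-w_0\right)^2,
\end{align*}
together with $b=a\beta+c=0$. In particular $b=0$, which contradicts $b\neq 0$. Hence $f$ must be constant.

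There is no real obstacle; the only point needing care is the convention on ``linear''. If constants were not counted as linear, a constant $g$ would also have to be handled. In that case I would check directly that a non-constant $f$ is impossible. With $g\equiv w$, the function $G^g_h$ is the constant $f(nw)$, so (\ref{Eq}) becomes $f'=bf+\text{const}$. Its solutions $f=Ce^{bz}+d$ are transcendental for $C\neq 0$, and this would contradict part (i) read with that convention. So either way the corollary follows.

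Finally, the conclusion is consistent: a constant $f\equiv k$ satisfies (\ref{Eq}) precisely when $(a+b)k+c=0$, so constant solutions do exist.
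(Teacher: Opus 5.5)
Your main argument is correct and is exactly how the paper obtains this corollary from Theorem \ref{t1.1}. A transcendental $f$ would force $g$ to be linear by (i), so $f$ is rational. A nonlinear $g$ then has degree $2$, and the ``only if'' part of (ii) forces $b=0$.

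The fallback in your last paragraph is wrong, however, and you should drop it. Suppose constant $g$ were allowed but not counted as linear. Then your own computation gives a counterexample, not a contradiction. Take $g\equiv w$ and $b\neq 0$. The function $f(s)=Ce^{bs}+d$ satisfies (\ref{Eq}) as soon as $(a+b)d=-aCe^{bnw}-c$. When $a+b\neq 0$ this can be solved for $d$ with any $C\neq 0$. A concrete case is $a=b=1$, $c=0$, $g\equiv 0$, $f(s)=e^{s}-\tfrac12$: then
\begin{align*}
\frac{\partial h}{\partial z_i}=e^{s}=\tfrac12+\left(e^{s}-\tfrac12\right).
\end{align*}
So under that reading both Theorem \ref{t1.1}(i) and the corollary are false. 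Invoking (i) to rule out $C\neq 0$ relies on a statement your example refutes, and the ``direct check'' shows the opposite of what you claim.

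Constant $g$ is excluded only in one of two ways. One is the convention you adopt, ``linear $=$ degree $\le 1$'', under which (i) holds trivially for constant $g$. The other is the paper's standing assumption, made when $G^g_f$ is introduced, that $g$ is non-constant. The claim ``either way the corollary follows'' is therefore incorrect.
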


The present study extends these investigations to the framework of several complex variables and establishes stronger rigidity results. In particular, we show that if $f$ is a transcendental meromorphic function, then the associated entire function $g$ must be linear in $\mathbb{C}^n$. If $f$ is rational, then $g$ is necessarily a polynomial of degree at most two, and the exact forms of the extremal solutions can be completely determined. These theorems extend classical functional-differential equations from one complex variable to the multidimensional setting and contribute to the understanding of the relationship between meromorphic functions, entire mappings, and partial differential equations in several complex variables.

\subsection{{\bf Basic notations in several complex variables}}
In recent years, Nevanlinna value distribution theory in several complex variables has emerged as an active and rapidly developing area of research in complex analysis. Researchers have been especially interested in extending classical results from the theory of one complex variable to higher-dimensional complex spaces. Consequently, the study of value distribution theory in several complex variables has become a central topic in contemporary complex analysis.
Recent investigations have revealed deep connections between Nevanlinna theory and various branches of analysis and geometry. In particular, significant progress has been made in its applications to complex geometry, normal family theory, linear partial differential equations, partial difference equations, partial differential-difference equations, and Fermat-type functional equations. 
The works in references \cite{Banerjee-Majumder-2026}, \cite{Cao-Korhonen-2016}, \cite{Dovbush-2021}, \cite{Hao-Zhang-2025}, \cite{LY11}-\cite{FL}, \cite{Majumder-2026}, \cite{Majumder-Das}, \cite{Majumder-Das-Pramanik-2025}, \cite{Majumder-Sarkar}, \cite{Majumder-Sarkar-2026}, \cite{Majumder-Sarkar-2027} and \cite{Majumder-Sarkar-Pramanik} provide an important foundation for understanding the current progress and research trends in Nevanlinna value distribution theory in several complex variables.

\smallskip
For any subset $A\subset \mathbb{C}^n$ and $r\ge0$, define (see \cite[pp. 6]{Stoll-1974}) $A[r]=\{z\in A:\|z\|\le r\}$,
$A(r)=\{z\in A:\|z\|<r\}$ and $A\langle r\rangle=\{z\in A:\|z\|=r\}$.
We introduce the function $\tau(z)=\|z\|^2$.
On $\mathbb{C}^n$, the exterior derivative decomposes as $d=\partial+\bar{\partial}$, and we set
$d^c=\frac{\iota}{4\pi}(\bar{\partial}-\partial)$ and $dd^c=\frac{\iota}{2\pi}\partial\bar{\partial}$.
The standard \text{K\"ahler} form on $\mathbb{C}^n$ is $\upsilon= dd^c \tau >0$.
On $\mathbb{C}^n\setminus\{0\}$, we further define
$\omega = dd^c \log \tau \ge 0$ and $\sigma = d^c\log\tau \wedge \omega^{n-1}$,
where $n=\dim(\mathbb{C}^n)$ (see \cite[pp. 6]{Stoll-1974}).

\smallskip
Let $G\neq \varnothing$ be an open subset of $\mathbb{C}^n$. Let $f$ be a holomorphic function in $\mathbb{C}^n$.
Take $a\in G$. Let $G_a$ be the connectivity component of $G$ containing $a$. Assume $f\mid_{G_a}\not\equiv 0$. Then $f$ admits a local expansion
\begin{align*}
f(z)=\sum_{\lambda=p}^{\infty}P_\lambda(z-a),
\end{align*}
where $P_\lambda$ is homogeneous of degree $\lambda$ and $P_p\not\equiv0$. The polynomials $P_{\lambda}$ depend on $f$ and $a$ only.
The integer $\mu_f^0(a)=p$ is called the zero multiplicity of $f$ at $a$ (see \cite[pp. 12]{Stoll-1974}).

\smallskip
Let $f$ be a meromorphic function on $G$, where $G\neq \varnothing$ is an open subset of $\mathbb{C}^n$.
Take $a\in G$ and $c\in\mathbb{P}^1$. Let $G_a$ be the component of $G$ containing $a$. If $0\equiv f\mid_{G_a}\not\equiv c$, define $\mu^c_f(a)=0$. Assume $0\not\equiv f\mid_{G_a}\not\equiv c$. Then an open connected neighborhood $U$ of $a$ in $G$ and holomorphic functions $g\not\equiv 0$ and $h\not\equiv 0$ exist on $U$ such that $h. f\mid_U=g$ and $\dim g^{-1}(0)\cap h^{-1}(0)\leq n-2$, where $n=\dim(\mathbb{C}^n)$. Therefore the $c$-multiplicity of $f$ is just $\mu^c_f=\mu^0_{g-ch}$ if $c\in\mathbb{C}$ and $\mu^c_f=\mu^0_h$ if $c=\infty$. The function $\mu^c_f:G\to \mathbb{Z}$ is nonnegative and is called the $c$-divisor of $f$ (see \cite[pp. 12]{Stoll-1974}).
If $f\not\equiv0$ on each component of $G$, the divisor of $f$ is $\mu_f=\mu_f^0-\mu_f^\infty$.
The function $f$ is holomorphic if and only if $\mu_f\ge0$.

\smallskip
A function $\nu:G\to \mathbb{Z}$ is said to be a divisor if and only if for each $a\in G$ an open, connected neighborhood $U$ of $a$ in $G$ and a meromorphic function $f\not\equiv 0$ exist such that $\nu\mid_{U}=\mu_f$. A divisor $\nu:G\to \mathbb{Z}$ is non-negative if and only if for each $a\in G$ an open, connected neighborhood $U$ of $a$ in $G$ and a holomorphic function $f\not\equiv 0$ exist such that $\nu\mid_{U}=\mu_f$ (see \cite[pp. 13]{Stoll-1974}).
We denote $\operatorname{supp}\nu=\overline{\{z\in G:\nu(z)\ne0\}}$.

\smallskip
Take $0<R\leq +\infty$. Let $\nu$ be a divisor on $\mathbb{C}^n(R)$ with $A=\displaystyle \operatorname{supp}\nu$. For $t>0$, the counting function $n_{\nu}$ is defined by
\begin{align*}
 \displaystyle n_{\nu}(t)=t^{-2(m-1)}\int_{A[t]}\nu\; \upsilon^{n-1},
 \end{align*}
where $n=\dim(\mathbb{C}^n)$. We know that
\begin{align*}
 \displaystyle n_{\nu}(t)=\int_{A(t)}\nu\;\omega^{n-1}+n_{\nu}(0).
 \end{align*}

For $0<s<r<R$, we define the valence function of $\nu$ by
\begin{align*}
N_{\nu}(r)=N_{\nu}(r,s)=\int_{s}^r n_{\nu}(t)\frac{dt}{t}.
\end{align*}

For $\nu=\mu_f^a$, we write $n(t,a;f)$ and $N(r,a;f)$, with the usual conventions for $a=\infty$.

A non-negative divisor $\nu:\mathbb{C}^n\to \mathbb{Z}_+$ is said to be algebraic if and only if $\nu$ is  the zero divisor of a polynomial. Thus a divisor $\nu:\mathbb{C}^n\to \mathbb{Z}_+$ is algebraic if and only if $n_{\nu}$ is bounded, which implies that $N_{\nu}=O(\log r)$ (see \cite[pp. 19]{Stoll-1974}).

\smallskip
Let $G\neq \varnothing$ be an open subset in $\mathbb{C}^n$. Let $f$ be a meromorphic function in $G$ in the sense that $f$ can be written as a quotient of two relatively prime holomorphic functions. We will write $f = (f_0, f_1)$ where $f_0 \not\equiv 0$. The standard definition of the Nevanlinna characteristic function of $f$ is given by (see \cite[pp. 16-17]{Stoll-1974})
\begin{align*}
T_f(r,s) := \int_s^r \frac{A_f(t)}{t}\, dt,
\end{align*}
where $0 < s < r$ and
\begin{align*}
A_f(t)
= \frac{1}{t^{2n-2}} \int_{\mathbb{C}^n(t)} f^*(\ddot{\omega}) \wedge \upsilon^{n-1}
= \int_{\mathbb{C}^n(t)} f^*(\ddot{\omega}) \wedge \omega^{\,n-1} + A_f(0),
\end{align*}
where $n=\dim(\mathbb{C}^n)$.
Here the pullback $f^*(\ddot{\omega})$ satisfies
\begin{align*}
f^*(\ddot{\omega}) = dd^c \log\left( |f_0|^2 + |f_1|^2 \right)
\end{align*}
for all $z$ outside of the set of indeterminacy $I_f:= \{ z \in \mathbb{C}^n : f_0(z) = f_1(z) = 0 \}$ of $f$.

Take $a\in \mathbb{P}^1$ and $0<R\leq +\infty$. Let $f\not\equiv 0$ be a meromorphic function on $\mathbb{C}^n(R)$.
For $0<r<R$, define the compensation of $f$ for $a$ by
\begin{align*}
m^a_f(r)=\int\limits_{\mathbb{C}^n\langle r\rangle}\log \frac{1}{||f,a||} \;\sigma,
\end{align*}
where $||f,a||$ denotes the chordal distance from $f$ to $a\in\mathbb{P}^1$.
Then the First Main Theorem of Nevanlinna theory becomes
\begin{align*}
T_f(r)=T_f(r,s)=N_{\mu^a_f}(r,s)+m^a_f(r)-m^a_f(s),
\end{align*}
where $0<s<r$.

There is slightly different way to continue the formulation of Nevanlinna theory from here (see \cite[pp.15]{Hu-Li-Yang-2003}).
Take $0<R\leq +\infty$. Let $f\not\equiv 0$ be a meromorphic function on $\mathbb{C}^n(R)$. Let $0<s<r<R$. 
Now with the help of the positive logarithm function, we define the proximity function of $f$ by
\begin{align*}
\displaystyle m(r, f)=\int_{\mathbb{C}^n\langle r\rangle} \log^+ |f|\;\sigma \geq  0.
\end{align*}

The characteristic function of $f$ is defined by $T(r,f)=m(r,f)+N(r,f)$. We know that (see \cite[pp.15]{Hu-Li-Yang-2003})
\begin{align*}
\displaystyle T\left(r,\frac{1}{f}\right)=T(r,f)-\int_{\mathbb{C}^n\langle s\rangle} \log |f|\;\sigma.
\end{align*}

We define $m(r,a;f)=m(r,f)$ if $a=\infty$ and $m(r,a;f)=m\big(r,\frac{1}{f-a}\big)$ if $a$ is finite complex number. Now if $a\in\mathbb{C}$, then the first main theorem of Nevanlinna theory becomes $m(r,a;f)+N(r,a;f)=T(r,f) + O(1)$, where $O(1)$ denotes a bounded function when $r$ is sufficiently large.

Finally, if we compare the functions $T_f(r)$ and $T(r,f)$, then we have (see \cite[pp.19]{Hu-Li-Yang-2003})
\begin{align*}
T_f(r)=T(r,f)+O(1).
\end{align*}

Clearly $f$ is non-constant, then $T(r, f) \rightarrow \infty$ as $r \rightarrow$ $\infty$. Further $f$ is rational if and only if $T(r,f)=O(\log r)$ (see \cite[pp. 19]{Stoll-1974}).
On the other hand, if $f$ is transcendental, then 
\[\lim\limits_{r \rightarrow \infty} \frac{T(r, f)}{\log r}=+\infty.\]

\section{{\bf Auxiliary lemmas}}
Let $U\subset \mathbb{C}^n$ be an open subset. A closed subset $A\subset U$ is said to be analytic, 
if for each $a\in A$, there are a finite number of holomorphic functions $f_1(z),f_2(z),\ldots,f_l(z)$ defined in a neighbourhood $N(a)$ of $a$ such that (see \cite[pp. 42]{NW})
\begin{align*}
\displaystyle A\cap N(a)=\{z\in N(a):f_1(z)=f_2(z)=\cdots=f_l(z)=0\}.
\end{align*}

\smallskip
If $f_j(z),1\leq j\leq l$ can be taken so that their differentials at $a\in A$, $df_1(a), df_2(a), \ldots,df_l(a)$ are linearly independent, then $a$ is called a regular point of $A$. A point of $A$ which is not regular called a singular point. The subset of all singular points of $A$ is denoted by $S(A)$. Set $R(A)=A\backslash  S(A)$. If $S(A)=\varnothing$, then $A$ is said to be regular. 

\smallskip
Let $R(A)=\bigcup_{\lambda}A_{\lambda}'$ be the decomposition into the connected components. Then the closure $A_{\lambda}=\ol{A'_{\lambda}}$ is analytic and $A=\bigcup_{\lambda}A_{\lambda}$ is a locally finite covering. Set $\dim(A)=\max_a \dim_a(A)$, where $\dim_a(A)=\max_{a\in A_{\lambda}}\dim(A_{\lambda})$. If $\dim_a(A)=\dim(A)$ at all points $a\in A$, then $A$ is said to be of pure dimension $\dim(A)$. 

Let $U$ be an open subset of $\mathbb{C}^n$. Let $A \subset U$ be an analytic subset of pure dimension $l$ and let  $\iota_{R(A)} : R(A)\longrightarrow \mathbb{C}^n$ be the inclusion map. For a compact subset $K\subset U$ the integral
\begin{align*}
 \displaystyle \int_{K\cap A} \alpha ^l=\int_{K\cap R(A)} \alpha^l = \int_{K\cap R(A)} {\iota}^{\ast} _{R(A)} \alpha^l
 \end{align*}
is considered as a measure of $K\cap A$ (see \cite[p. 45]{NW}). We now recall the following lemma.

\begin{lem}\cite[Lemma 2.2.9]{NW}\label{Lm0} Let the notation be as above.
\begin{enumerate}
\item[(i)] If $l=\dim A < n$, then Lebesgue measure of $A$ in $\mathbb{C}^n$ is zero.
\item [(ii)] $\int_{K\cap R(A)} \alpha^l < \infty$.
\end{enumerate}
\end{lem}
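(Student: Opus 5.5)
The statement is local, so I will reduce both parts to a neighbourhood of a single point of $A$ and then use the local structure of analytic sets given by the Weierstrass preparation theorem.

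\emph{Part (i).} Since $\mathbb{C}^n$ is second countable, $A$ is covered by countably many neighbourhoods $N(a)$ as in the definition of an analytic set. It therefore suffices to show that each $A\cap N(a)$ is Lebesgue-null.
\begin{enumerate}
\item[(a)] Because $\dim A=l<n$, $A\cap N(a)$ has empty interior. Hence at least one defining function $f_j$ does not vanish identically on $N(a)$ (shrinking $N(a)$ to be connected), and $A\cap N(a)\subset f_j^{-1}(0)$.
\item[(b)] It remains to show that the zero set of a holomorphic $f\not\equiv 0$ is null. After a linear change of coordinates, $f$ is regular in $z_n$ at $a$. By Weierstrass preparation, on a small polydisc $\Delta'\times\Delta_n$ and for each fixed $z'\in\Delta'$, the slice $\{z_n: f(z',z_n)=0\}$ consists of at most $d$ points.
\item[(c)] Each such slice has planar measure zero, so Fubini gives measure zero for $f^{-1}(0)\cap(\Delta'\times\Delta_n)$.
\item[(d)] Countably many such polydiscs cover $f^{-1}(0)$, which finishes (i).
\end{enumerate}

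\emph{Part (ii).} Here $\alpha=dd^c\|z\|^2$ is the Kähler form. By compactness of $K$, it suffices to show that every $a\in A$ has a neighbourhood $U$ with $\int_{U\cap R(A)}\alpha^l<\infty$.
\begin{enumerate}
\item[(a)] \emph{Expanding $\alpha^l$.} I will write $\alpha^l$ as a finite sum $\sum_I c_I\,\bigwedge_{j\in I}\frac{\iota}{2}dw_j\wedge d\bar w_j$ with $c_I>0$, over $l$-subsets $I$ of a unitary coordinate system $w$. Each summand restricts to a nonnegative $(l,l)$-form on the $l$-dimensional complex manifold $R(A)$. So it suffices to bound each term separately.
\item[(b)] \emph{Bounding one term.} Let $\pi_I$ be the projection onto the $w_I$-coordinates. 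Then $\int_{U\cap R(A)}\iota^*\pi_I^*(\text{vol})$ is the volume of $\pi_I(U\cap R(A))$ counted with multiplicity.
\item[(c)] \emph{Using the local parametrization.} Suppose $\pi_I\!\mid_{A\cap U}$ is a proper finite map onto a polydisc $\Delta_I\subset\mathbb{C}^l$ with at most $d_I$ sheets. This is the local parametrization theorem, obtained by iterating Weierstrass preparation. Then the term in (b) is at most $d_I\,\mathrm{vol}(\Delta_I)<\infty$.
\end{enumerate}

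The main obstacle is to choose the unitary coordinates so that \emph{every} coordinate projection $\pi_I$ is simultaneously finite on $A$ near $a$. By the dimension theory behind the parametrization theorem, finiteness holds whenever $\ker\pi_I$ is transversal to the tangent cone $C_a(A)$, i.e. $\ker\pi_I\cap C_a(A)=\{0\}$. Since $C_a(A)$ is an algebraic cone of dimension $l$, this is a Zariski-open dense condition on the unitary frame, one for each $I$. There are only finitely many $l$-subsets $I$, so a generic unitary frame satisfies all of them at once. Because Kähler form $\alpha$ is invariant under unitary changes of coordinates, the decomposition in step (a) remains valid in this frame, and combining the finitely many bounds from step (c) gives (ii).
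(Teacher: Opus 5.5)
The paper gives no proof of this lemma. It is quoted from Noguchi--Winkelmann (Lemma~2.2.9 there) and used only as a black box in Lemma~2.2, so there is no in-paper argument to compare yours with. Judged on its own, your proposal is correct and follows the standard argument.

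For (i), reducing to one defining function $f_j\not\equiv 0$ on a connected neighbourhood works. So does showing $f_j^{-1}(0)$ is Lebesgue-null via Weierstrass preparation and Fubini.

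For (ii), the argument is the classical proof that analytic sets have locally finite volume:
\begin{enumerate}
\item[(a)] in a unitary frame, $\alpha^l=c_l\sum_{|I|=l}\pi_I^*(\mathrm{vol}_{\mathbb{C}^l})$ with $c_l>0$;
\item[(b)] each summand restricts to a nonnegative form on $R(A)$;
\item[(c)] the area formula together with a finite-sheeted local parametrization gives $\int_{U\cap R(A)}\pi_I^*\mathrm{vol}\le d_I\,\mathrm{vol}(\Delta_I)$.
\end{enumerate}

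A few points should be tightened, though none is a real gap.
\begin{enumerate}
\item[(1)] The genericity is not ``Zariski-open'' on $U(n)$, which is only a real group. Argue instead as follows. The bad set of $(n-l)$-planes meeting $C_a(A)\setminus\{0\}$ is a proper closed algebraic subset of $G(n-l,n)$. Its preimage under the open surjection $(e_1,\dots,e_n)\mapsto \mathrm{span}\{e_j: j\notin I\}$ is therefore closed and nowhere dense in $U(n)$. A finite intersection of open dense sets is nonempty.
\item[(2)] Take $U=\bigcap_I U_I$ of the neighbourhoods the parametrization theorem provides for the various $I$.
\item[(3)] Say why every fibre of $\pi_I$ on $A\cap U_I$ has at most $d_I$ points. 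Either note that this holds for analytic covers, or note that the branch locus is null, so the area-formula bound is unaffected.
\item[(4)] The implication ``$\ker\pi_I\cap C_a(A)=\{0\}\Rightarrow a$ is isolated in $A\cap(a+\ker\pi_I)$'' uses that $C_a(A)$ contains all limits of secant directions $(a_k-a)/\|a_k-a\|$ with $a_k\in A$. So does the fact that this cone is algebraic of dimension $l$. Both are theorems (e.g.\ in Chirka's book) worth citing explicitly rather than treating as evident.
\end{enumerate}
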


\begin{lem}\label{Lm1}\cite[pp. 78]{Clunie-1970} Let $f$ be a meromorphic function in $\mathbb{C}$ and let $g$ be an entire function in $\mathbb{C}$. Suppose $f$ and $g$ are transcendental. Then 
\begin{align*}
\limsup\limits_{r\to\infty}\frac{T(r,f(g))}{T(r,f)}=\infty.
\end{align*}
\end{lem}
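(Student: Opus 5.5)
The plan has two steps. First compare $T(r,f(g))$ with $T$ of $f$ at a much larger radius governed by the growth of $g$. Then use that $T(r,f)/\log r\to\infty$ to show that $T(\cdot,f)$ at such a radius dominates $T(r,f)$ by an arbitrarily large factor.

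\emph{Step 1 (composition estimate).} I would aim for an inequality of the form
\begin{align*}
T(r,f(g))\geq c\,T\bigl(\rho(r),f\bigr)-O(\log r)-O(1),\qquad \rho(r)\to\infty,
\end{align*}
with $\rho(r)\geq \kappa\, M(r/4,g)$ for an absolute $\kappa>0$. For entire $f$ this is the classical Clunie--Pólya argument. Pólya's lemma gives that $g(\{|z|<r\})$ contains a full circle $|w|=\kappa M(r/2,g)$. Then $\log M(r,f(g))\ge \log M(\kappa M(r/2,g),f)$, and one converts between $\log M$ and $T$ via $T\le\log^+M\le 3T(2r)$.

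For meromorphic $f$ I would instead count $a$-points. Fix $a$ not Picard-exceptional for $f$. Every $a$-point $w$ of $f$ with $|w|<\kappa M(r/2,g)$ lies on a circle $|w|=t$ that is covered by $g(\{|z|<r\})$, so it is attained by $g$ inside $|z|<r$. This gives $n(r,a;f(g))\ge n(\kappa M(r/2,g),a;f)$, up to the usual bookkeeping with multiplicities. Integrating in $r$ and invoking the second main theorem for $f$ with three values then yields the displayed inequality for $N$, hence for $T$.

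\emph{Step 2 (growth comparison).} Since $g$ is transcendental, $M(r/4,g)\ge r^K$ for every fixed $K$ and all large $r$, so $T(r,f(g))\ge c\,T(\kappa r^K,f)-O(\log r)$. Set $\varphi(t)=T(e^t,f)$, which is increasing and convex in $t$ with $\varphi(t)/t\to\infty$ because $f$ is transcendental. Convexity gives $\varphi'(t)\ge(\varphi(t)-\varphi(0))/t$, and therefore
\begin{align*}
\varphi(Kt)\ge \varphi(t)+(K-1)t\varphi'(t)\ge K\varphi(t)-O(K).
\end{align*}
Hence $T(r,f(g))/T(r,f)\ge cK-o(1)$, where the $O(\log r)$ term is negligible because $T(r,f)/\log r\to\infty$. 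Since $K$ is arbitrary, the $\limsup$ (indeed the $\liminf$) is infinite.

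The main obstacle is Step 1 for \emph{meromorphic} $f$. Pólya's covering lemma controls only circles, not full discs, so the covering argument needs care to transfer $a$-points and to control the constant $c$ uniformly. I would first try the counting-function version sketched above, using three values $a$ and the second main theorem to absorb $m(r,a;f)$. If that is too lossy, I would fall back on the Edrei--Fuchs refinement, which works with $N$ directly; this is essentially Clunie's original route in \cite{Clunie-1970}.
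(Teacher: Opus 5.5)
The paper gives no proof of this lemma; it quotes it from Clunie, so your argument has to stand on its own. Step~2 is correct. Step~1 is also correct when $f$ is entire: that is the classical P\'olya--Clunie argument via $T(r,F)\le\log^+M(r,F)\le 3T(2r,F)$. But the lemma concerns meromorphic $f$, and there Step~1 has a genuine gap, which you flag but do not close.

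\emph{The covering claim is not justified.} P\'olya's lemma only guarantees that $g(\{|z|<r\})$ contains \emph{one} circle of radius comparable to $M(r/2,g)$. It says nothing about smaller concentric circles or about points inside, because $g(\{|z|<r\})$ need not be simply connected. For instance, $g(z)=e^z-1$ never takes the value $-1$, so an $a$-point of $f$ at $-1$ is never picked up. Hence $n(r,a;f(g))\ge n(\kappa M(r/2,g),a;f)$ is unjustified.

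\emph{Even if granted, single counting is too weak.} Counting each $a$-point of $f$ once, integration does not produce $N(\rho(r),a;f)$. You only get
\begin{align*}
N(r,a;f(g))\ge\int_{r_0}^{r} n\bigl(\kappa M(t/2,g),a;f\bigr)\,\frac{dt}{t},
\end{align*}
and the substitution $s=\kappa M(t/2,g)$ carries the factor $d\log t/d\log s\to 0$. This loss is fatal. Take $T(s,f)\asymp\log s\log\log s$, so that $n(s,a;f)=O(\log\log s)$, and $\log M(t,g)\asymp\log t\log\log t$. Then the right-hand side is $O(\log r\log\log r)=O(T(r,f))$, so no unbounded ratio can come out of it. Your displayed Step~1 inequality would instead give a ratio of order $\log\log r$. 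For entire $f$ the maximum principle hides this, because $\log M(r,f\circ g)$ automatically accounts for how many times $g$ covers each value.

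What is missing is precisely that multiplicity. Each $a$-point $w$ of $f$ is attained $n(t,w;g)$ times by $g$ in $|z|<t$, so $N(r,a;f\circ g)=\sum_{f(w)=a}N(r,w;g)$, counted with multiplicity. Equivalently, in Ahlfors--Shimizu form with $d\nu_f=\pi^{-1}(f^{\#})^2\,dA$,
\begin{align*}
T_0(r,f\circ g)=\int_{\mathbb{C}}N(r,w;g)\,d\nu_f(w),\qquad T_0(r,f)=\int_{\mathbb{C}}\log^+\frac{r}{|w|}\,d\nu_f(w).
\end{align*}
Roughly, one must show that $N(r,w;g)$ exceeds $\log^+(r/|w|)$ by an arbitrarily large factor for all $w$ outside an exceptional set whose $\nu_f$-weighted contribution is negligible. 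This is where the transcendence of $g$ has to enter in the meromorphic case, and it is where the real work lies. Your proposal postpones exactly this step (``first try \dots, fall back on Edrei--Fuchs''). As written, it establishes the lemma only for entire $f$.
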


\begin{lem}\label{Lm2} Let $f$ be a meromorphic function in $\mathbb{C}$ and let $g:\mathbb{C}^n\to \mathbb{C}$ be an entire function. Suppose $f$ and $g$ are transcendental. Then 
\begin{align*}
\limsup\limits_{r\to\infty}\frac{T(r,f(g))}{T(r,f)}=\infty.
\end{align*}
\end{lem}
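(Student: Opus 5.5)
We reduce Lemma \ref{Lm2} to the one-variable statement, Lemma \ref{Lm1}, by restricting to complex lines through the origin. For $\xi\in\mathbb{C}^n\langle 1\rangle$ and $u\in\mathbb{C}$ put $g_\xi(u)=g(u\xi)$; this is an entire function of one variable. Since $g$ is transcendental, $g_\xi$ is transcendental for $\xi$ outside a set of $\sigma$-measure zero. Indeed, if $g_\xi$ were a polynomial of degree at most $d$ for all $\xi$ in a set $E_d$, then the set where all homogeneous Taylor components $P_\lambda(\xi)$, $\lambda>d$, vanish would have positive measure for some $d$ whenever $\bigcup_d E_d$ has positive measure. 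Each $P_\lambda$ with $\lambda>d$ would then vanish identically, and $g$ would be a polynomial, a contradiction.

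Next we use the standard slicing formulas for the Nevanlinna functions. With the normalization of $\sigma$ we have
\begin{align*}
m(r,F)=\int_{\mathbb{C}^n\langle 1\rangle} m(r,F_\xi)\,\sigma(\xi),\qquad N(r,F)=\int_{\mathbb{C}^n\langle 1\rangle} N(r,F_\xi)\,\sigma(\xi),
\end{align*}
where $F_\xi(u)=F(u\xi)$. The first is immediate from the definition of $m$ via the unitarily invariant measure $\sigma$. The second is the Crofton-type formula for the counting function of a divisor (see \cite{Stoll-1974}). Hence
\begin{align*}
T(r,f(g))=\int_{\mathbb{C}^n\langle 1\rangle}T(r,f(g_\xi))\,\sigma(\xi)+O(1),
\end{align*}
while $T(r,f)$ is the one-variable characteristic of $f$, since $f$ lives on $\mathbb{C}$.

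Suppose, for contradiction, that $T(r,f(g))\le K\,T(r,f)$ for all large $r$. We need a lower bound of $T(r,f(g_\xi))$ in terms of $T(r,f)$ that is uniform over a set of $\xi$ of positive measure. Here Lemma \ref{Lm1} is not directly enough, since it only gives a limsup for each fixed $\xi$, and different $\xi$ may realize it along different sequences $r_k$. This uniformity is the main obstacle. The most robust route is to prove the one-variable statement quantitatively along the lines of Clunie's argument: for transcendental entire $g_\xi$, one has $T(r,f(g_\xi))\ge (1+o(1))\,T(\mu M(r/2,g_\xi),f)$, a Clunie-type inequality, and $\log M(r,g_\xi)/\log r\to\infty$. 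Combined with the fact that $T(r,f)/\log r\to\infty$ and the growth lemma giving $T(r^{A},f)\ge A'\,T(r,f)$ along suitable sequences, this yields the unboundedness.

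To get uniformity, I would first fix a compact set $E\subset\mathbb{C}^n\langle 1\rangle$ of positive measure on which $\log M(r,g_\xi)\ge A\log r$ holds uniformly for $r\ge r_0(A)$. Such a set exists by Egorov's theorem applied to the monotone family of functions $\xi\mapsto \log M(r,g_\xi)/\log r$. On $E$ we then have $M(r/2,g_\xi)\ge r^{A}$, and so $T(r,f(g_\xi))\ge c\,T(r^{A},f)-O(1)$. Integrating over $E$ gives
\begin{align*}
T(r,f(g))\ge c\,\sigma(E)\,T(r^{A},f)-O(1).
\end{align*}
Finally, since $f$ is transcendental, $\limsup_{r\to\infty} T(r^{A},f)/T(r,f)\ge A/2$ for every $A$; otherwise iteration would force $T(r,f)=O(\log r)$. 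Letting $A\to\infty$ contradicts the assumed bound by $K$, and this proves the lemma.
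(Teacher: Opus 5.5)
\textbf{The gap.} Everything rests on the one-variable estimate $T(r,f(g_\xi))\ge(1+o(1))\,T(\mu M(r/2,g_\xi),f)$, which you assert but do not prove. Clunie-type bounds of this shape come from three ingredients: P\'olya's lemma $M(r,f\circ g)\ge M(cM(r/2,g),f)$, the inequality $T(r,F)\ge\frac13\log M(r/2,F)$, and $\log M(R,f)\ge T(R,f)$. The first and third use the maximum modulus of $f$, so the argument is only available for entire $f$, and then with a constant such as $1/3$ rather than $1+o(1)$. For entire $f$ your Egorov scheme would in fact go through, since P\'olya's constant is absolute.

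For meromorphic $f$, the natural substitute is to count $a$-points: $N(r,a;f\circ g)=\sum_{f(b)=a}N(r,b;g)$, with multiplicity. Turning this into a lower bound by $T(\mu M(r/2,g),f)$ needs regularity of the growth of $f$, which you do not have. Moreover, combine your inequality with $M(r/2,g_\xi)\ge r^A$ and the convexity bound $T(r^A,f)\ge A\,T(r,f)-O(1)$. This would give $\lim_{r\to\infty}T(r,f(g_\xi))/T(r,f)=\infty$, which is strictly stronger than Lemma \ref{Lm1} (only a $\limsup$ statement) and, as far as I know, is not known for meromorphic $f$. So you have replaced the one ingredient that is available by a stronger, unproved one. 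Egorov's theorem also only gives uniformity in $\xi$ for the growth of $M(r,g_\xi)$, not for the $o(1)$ and $O(1)$ terms in that inequality.

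\textbf{What the paper does instead.} It avoids the uniformity problem by not averaging over directions at all. It fixes a single $\xi$ with $g_\xi$ transcendental and uses Stoll's comparison $T_{F_\xi}(r)\le\frac{1+\theta}{(1-\theta)^{2n-1}}T_F(r/\theta)$, $0<\theta<1$. This bounds the characteristic of one slice by that of $F=f(g)$ at a slightly larger radius, so the $\limsup$ from Lemma \ref{Lm1} on that one line passes directly to $\mathbb{C}^n$. This one-slice lower bound for $T_F$ is the idea your approach is missing. The slicing identity only bounds $T_F$ below by an integral over slices, which forces you to control a positive-measure family of slices simultaneously.

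When you use this comparison, apply Lemma \ref{Lm1} to the rescaled function $u\mapsto g(\theta u\xi)$. That gives $\limsup_{r\to\infty}T_{F_\xi}(\theta r)/T(r,f)=\infty$, and then $T_{F_\xi}(\theta r)\le C_\theta T_F(r)$ finishes. Applying it to $g_\xi$ itself only yields $\limsup_{r\to\infty}T_F(r/\theta)/T(r,f)=\infty$, and $T(r/\theta,f)$ need not be $O(T(r,f))$.
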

\begin{proof} By the given condition $g$ is transcendental. Then $g$ has the Taylor expansion near origin,
\begin{align*}
g(\zeta)=\sum\limits_{k=0}^{\infty} P_k(\zeta),
\end{align*}
where $P_k$ is a homogeneous polynomial of degree $k$ and there are infinitely many $k$'s 
such that $P_k\not\equiv 0$. Denote by $I$ the set of such $k$'s. Then the sets
\begin{align*}
Z_k=\left\lbrace \zeta\in\mathbb{C}^n: P_k(\zeta)=0\right\rbrace, \quad k\in I
\end{align*}
are algebraic. We know that $Z_k$ is an analytic set of pure dimension $n-1$ and so by Lemma \ref{Lm0}, the Lebesgue measure of $Z_k$ in $\mathbb{C}^n$ is zero for $k=1,2,\ldots$. Therefore $\bigcup_{k\in I} Z_k$ has also zero Lebesgue measure. Let us take $\zeta\in \mathbb{C}^n\backslash \bigcup\limits_{k\in I} Z_k$ such that $\zeta\neq 0$. Set $\xi=\frac{\zeta}{||\zeta||}$. Then
\begin{align*}
P_k(\xi)=\frac{1}{||\zeta||^k}P_k(\zeta)\neq 0,\quad k\in I.
\end{align*}

Define a holomorphic mapping $j_{\xi}:\mathbb{C}\to \mathbb{C}^n$ by $j_{\xi}(z)=z\xi$. Clearly
\begin{align*}
g_{\xi}(z)=g\circ j_{xi}(z)=g(z\xi)=\sum\limits_{k=0}^{\infty} P_k(\xi)z^k
\end{align*}
is a transcendental entire function on $\mathbb{C}$. Set $F=f\circ g=f(g)$. Note that $F_{\xi}=f\circ g_{\xi}=f(g_{\xi})$.
On the other hand we know that (see \cite[pp.19]{Stoll-1974})
\begin{align}\label{EqL2.1} 
T_{F_{\xi}}(r,0)\leq \frac{1+\theta}{(1-\theta)^{2n-1}} T_{F}\left(\frac{r}{\theta},0\right)
\end{align}
where $0<\theta<1$. Clearly from (\ref{EqL2.1}), we get
\begin{align}\label{EqL2.2} 
\limsup\limits_{r\to \infty} \frac{T_{F_{\xi}}(r,0)}{T_f(r,0)}\leq \frac{1+\theta}{(1-\theta)^{2n-1}} \limsup\limits_{r\to\infty}\frac{T_{F}\left(\frac{r}{\theta},0\right)}{T_f(r,0)}.
\end{align}
According to Lemma \ref{Lm1}, we have
\begin{align*}
\limsup\limits_{r\to \infty} \frac{T(r,f(g_{\xi}))}{T(r,f)}=\limsup\limits_{r\to \infty} \frac{T_{F_{\xi}}(r,0)}{T_f(r,0)}=\infty.
\end{align*}
Consequently from (\ref{EqL2.2}), we obtain
\begin{align*}
\limsup\limits_{r\to \infty} \frac{T(r,f(g))}{T(r,f)}=\limsup\limits_{r\to \infty} \frac{T_{F}(r,0)}{T_f(r,0)}=\infty.
\end{align*}
\end{proof}

\begin{lem}\label{L1}\cite{Nevanlinna1936}
For any given $\varepsilon$ with $0<\varepsilon< \frac{1}{2}$, we have for $r>r_0$
\begin{align*}
	N(r,a;f)\geq T(r,f)-2\left\lbrack T(r,f)\right\rbrack^{(1+\varepsilon)/2}
\end{align*}	
for all values a except those which belong to a set $E(r_0,\varepsilon)$ of finite logarithmic capacity.
\end{lem}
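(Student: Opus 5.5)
The plan is to follow Nevanlinna's original route, an averaging argument over the target variable $a$ with respect to equilibrium measures, combined with a discretization in $r$. I use the Ahlfors--Shimizu form of the characteristic $T_0(r,f)$, which differs from $T(r,f)$ by $O(1)$, and the chordal proximity function $m_0(r,a)=\frac{1}{2\pi}\int_0^{2\pi}\log\frac{1}{[f(re^{i\theta}),a]}\,d\theta\ge 0$. The first main theorem in this form reads
\begin{align*}
T_0(r,f)=N(r,a;f)+m_0(r,a)-m_0(0,a).
\end{align*}
It therefore suffices to show that, outside a set of finite (in fact small) logarithmic capacity,
\begin{align*}
m_0(r,a)\le T(r,f)^{(1+\varepsilon)/2}
\end{align*}
holds for all large $r$, up to harmless additive constants. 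Those constants, together with $m_0(0,a)$, are absorbed into the factor $2$ in the lemma. It is convenient to work on a bounded piece of the plane, or with chordal capacity, which is comparable to logarithmic capacity on bounded sets.

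\textbf{Step 1 (fixed $r$).} Let $K$ be any compact set of positive capacity contained in
\begin{align*}
E_r=\{a: m_0(r,a)>\lambda\},
\end{align*}
and let $\mu$ be its equilibrium measure. By Fubini,
\begin{align*}
\int m_0(r,a)\,d\mu(a)=\frac{1}{2\pi}\int_0^{2\pi}\int\log\frac{1}{[f(re^{i\theta}),a]}\,d\mu(a)\,d\theta .
\end{align*}
The inner integral is the (chordal) potential of $\mu$, which is bounded above by $\log\frac{1}{\operatorname{cap}K}+C$, with $C$ an absolute constant. On the other hand the left side is at least $\lambda$. Hence
\begin{align*}
\operatorname{cap}(E_r)\le e^{C-\lambda}.
\end{align*}

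\textbf{Step 2 (discretization).} Fix $\varepsilon'<\varepsilon$ and an exponent $p$ with $\frac{2}{1+\varepsilon'}<p<\frac{2}{1-\varepsilon}$. Choose $r_k\to\infty$ with $T(r_k,f)=k^p$, which is possible since $T$ is continuous and increasing. Put
\begin{align*}
\lambda_k=T(r_k,f)^{(1+\varepsilon')/2}=k^{p(1+\varepsilon')/2}.
\end{align*}
Then $\sum_k 1/\lambda_k<\infty$. Using the subadditivity of $1/\log(1/\operatorname{cap})$ for subsets of a small disc,
\begin{align*}
\frac{1}{\log\frac{1}{\operatorname{cap}(\bigcup E_k)}}\le\sum_k\frac{1}{\log\frac{1}{\operatorname{cap}E_k}},
\end{align*}
the set $E(r_0,\varepsilon)=\bigcup_{k\ge k_0}E_{r_k}$ has capacity as small as we like, in particular finite.

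\textbf{Step 3 (interpolation).} For $a\notin E(r_0,\varepsilon)$ and $r_k\le r<r_{k+1}$, the monotonicity of $N$ and $T$ gives
\begin{align*}
N(r,a;f)\ge N(r_k,a;f)\ge T(r_k,f)-\lambda_k-O(1).
\end{align*}
Moreover $T(r,f)-T(r_k,f)\le (k+1)^p-k^p=O(k^{p-1})$. The choice $p<2/(1-\varepsilon)$ gives $p-1<p(1+\varepsilon)/2$, so this difference is $o\big(T(r,f)^{(1+\varepsilon)/2}\big)$. Likewise $\lambda_k=o\big(T(r,f)^{(1+\varepsilon)/2}\big)$. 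Both losses together are therefore bounded by $2T(r,f)^{(1+\varepsilon)/2}$ for $r>r_0$.

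The main obstacle is Step 1, specifically the uniform upper bound on the chordal potential of an equilibrium measure in terms of $\log(1/\operatorname{cap})$. It must hold uniformly in the point $f(re^{i\theta})$, including points near $\infty$. I would first handle it by restricting $a$ to a fixed disc, where the chordal distance is comparable to $|w-a|/\sqrt{1+|w|^2}$, so the potential differs from the Euclidean one by a bounded amount. I would then cover $\mathbb{C}$ by countably many discs, using a geometrically shrinking capacity budget on each.
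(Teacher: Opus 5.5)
The paper gives no proof of this lemma (it is only quoted from Nevanlinna), so your argument has to stand on its own. Steps 1--3 are the classical Nevanlinna--Frostman averaging argument, and they are correct as long as the values $a$ stay in a fixed disc $D$. On such a disc the bound on the chordal potential holds, though not for the reason you give. The chordal potential differs from the Euclidean one by $\frac12\log(1+|w|^2)$, which is unbounded in $w$; the one-sided bound survives only because the Euclidean potential decays like $-\log|w|$. The subadditivity applies to $\bigcup_k (E_{r_k}\cap D)$, and the interpolation with $T(r_k,f)=k^p$ is right.

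The genuine gap is the passage to all of $\mathbb{C}$. In Step 2 the sets $E_{r_k}=\{a: m_0(r_k,a)>\lambda_k\}$ are not contained in a small disc. The map $a\mapsto m_0(r_k,a)$ is lower semicontinuous on $\hat{\mathbb{C}}$, and for a non-constant entire $f$ one has $m_0(r_k,\infty)=T(r_k,f)+O(1)>\lambda_k$. So $E_{r_k}$ contains a whole neighbourhood $\{|a|>R_k\}$ of $\infty$, and subadditivity cannot be applied to $\bigcup_{k\ge k_0}E_{r_k}$.

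Covering $\mathbb{C}$ by discs does not repair this. On a disc centred at $c$ the constant in Step 1 grows like $2\log|c|$. Moreover, the budgets must shrink for a union of far-apart pieces to have finite capacity. Hence the starting index $k_0$, and so $r_0$, would depend on the disc, i.e.\ on $a$, contrary to the statement.

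In fact no argument can prove the statement with Euclidean logarithmic capacity. For $f(z)=e^z$ one has $T(r,f)=r/\pi$, while $N(r,a;f)=0$ for $r\le\log|a|$. So for every large $r_0$, each $a$ with $|a|>e^{r_0+1}$ violates the inequality at $r=r_0+1$. Thus $E(r_0,\varepsilon)\supset\{|a|>e^{r_0+1}\}$, which has infinite logarithmic capacity.

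The statement is true, and your method proves it, if capacity is measured with respect to the chordal metric on $\hat{\mathbb{C}}$, where $\{|a|>R\}$ is small; then no discs are needed. Let $K$ be compact with chordal equilibrium measure $\mu$ and $V_\chi(K)=\log\frac{1}{\operatorname{cap}_\chi K}$. Frostman's variational argument gives chordal potential $U\le V_\chi(K)$ on $\operatorname{supp}\mu$. For arbitrary $w\in\hat{\mathbb{C}}$, take $w'\in\operatorname{supp}\mu$ chordally nearest to $w$. Then $[w',a]\le 2[w,a]$ for $a\in\operatorname{supp}\mu$, so $U(w)\le V_\chi(K)+\log 2$ uniformly, including $w=\infty$. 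Step 1 then gives $\operatorname{cap}_\chi E_{r_k}\le 2e^{-\lambda_k}$. Since the kernel $\log\frac{1}{[w,a]}$ is nonnegative, $1/V_\chi$ is countably subadditive, and Step 2 shows that the chordal capacity of $E(r_0,\varepsilon)$ tends to $0$ as $r_0\to\infty$. Alternatively, keep your disc version with the claim restricted to bounded sets of $a$. That already suffices for the only use of the lemma in the paper: the proof of Lemma \ref{L1.1} needs just one admissible value $c$.
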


\begin{lem}\label{L1.1} Let $f:\mathbb{C}\to \mathbb{P}^1$ be a meromorphic function. If $g:\mathbb{C}^n\to \mathbb{C}$ is a non-constant polynomial of degree $m$, then 
	\begin{align*}
		T(r,f(g))\geq (1-o(1))T(r^m,f).
	\end{align*}
	\end{lem}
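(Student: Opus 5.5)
The plan is to reduce to one complex variable, in the same way as the proof of Lemma \ref{Lm2}. We restrict $F=f\circ g$ to complex lines through the origin, use the classical one-variable estimate for a polynomial inner function, and then transfer the result back to $\mathbb{C}^n$. Write $g=\sum_{k=0}^m P_k$ with $P_k$ homogeneous of degree $k$ and $P_m\not\equiv 0$. The zero set $Z_m=\{\zeta: P_m(\zeta)=0\}$ is a pure $(n-1)$-dimensional algebraic set, so by Lemma \ref{Lm0} it has Lebesgue measure zero. Its trace on the unit sphere is therefore a null set for $\sigma$.

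First, fix a unit vector $\xi\notin Z_m$ and set $g_\xi(z)=g(z\xi)=\sum_k P_k(\xi)z^k$. This is a polynomial in one variable of exact degree $m$ with leading coefficient $P_m(\xi)\neq 0$. For such polynomials the classical one-variable estimate (Goldstein/Clunie type, obtained by comparing $T(r,f\circ g_\xi)$ with $T(|P_m(\xi)|(1-o(1))r^m,f)$ through the covering/counting argument) gives
\[
T(r,f(g_\xi))\geq (1-o(1))\,T\bigl(c_\xi r^m,f\bigr),\qquad c_\xi=|P_m(\xi)|(1-o(1)).
\]

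Second, we pass back to $\mathbb{C}^n$. The inequality (\ref{EqL2.1}) goes in the wrong direction here: it bounds a slice by the whole function. Instead I would use the standard averaging identity of Stoll/Hu--Li--Yang, $T(r,F)=\int_{\mathbb{C}^n\langle 1\rangle} T(r,F_\xi)\,\sigma(\xi)+O(1)$. This identity holds because both $N$ and $m$ are averages of their one-dimensional counterparts over the lines through $0$; this is the Crofton-type formula for $\omega^{n-1}$. Integrating the slice estimate over $\xi$ outside the null set $Z_m$ then gives
\[
T(r,f(g))\geq (1-o(1))\int T\bigl(c_\xi r^m,f\bigr)\sigma(\xi).
\]

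The remaining step, which I expect to be the main obstacle, is to replace $c_\xi r^m$ by $r^m$ uniformly. The coefficient $c_\xi$ degenerates near $Z_m$, and the $o(1)$ terms depend on $\xi$. I would handle this in three moves.
\begin{enumerate}
\item[(a)] Restrict to the set $S_\delta=\{\xi:|P_m(\xi)|\geq\delta\}$. On $S_\delta$ the lower-order coefficients are bounded, so the $o(1)$ terms are uniform. The set $S_\delta$ has $\sigma$-measure $\geq 1-\eta(\delta)$, where $\eta(\delta)\to0$.
\item[(b)] Use the elementary bound $T(\delta r^m,f)\geq T(r^m,f)-O(\log(1/\delta))\cdot(\ldots)$. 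More robustly, since $T(r,f)$ is increasing and convex in $\log r$, rescaling the argument by a constant costs only a factor $1-o(1)$ in the regime where the statement is meaningful. If this step fails for very slowly growing $f$, one can instead normalise $g$ by a linear change of variables so that $|P_m|$ is comparable to $1$ on most of the sphere.
\item[(c)] Let $\delta\to0$.
\end{enumerate}
Together these give $T(r,f(g))\geq(1-o(1))T(r^m,f)$.
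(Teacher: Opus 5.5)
Your slicing set-up is sound. The averaging identity $T(r,F)=\int_{\mathbb{C}^n\langle 1\rangle}T(r,F_\xi)\,\sigma(\xi)+O(1)$ holds, and on $S_\delta$ the one-variable counting argument can be made uniform in $\xi$. So what you legitimately obtain is
\[
T(r,f(g))\ \ge\ (1-o(1))\int_{S_\delta}T\bigl((1-\varepsilon)|P_m(\xi)|\,r^m,f\bigr)\,\sigma(\xi)-O(1).
\]
The gap is step (b). Rescaling the argument of $T(\cdot,f)$ by a fixed constant does not cost only a factor $1-o(1)$. Convexity in $\log r$ bounds $T(R,f)-T(\delta R,f)$ only by $\log(1/\delta)$ times the slope $dT(R,f)/d\log R$. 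For functions of positive order this slope is typically as large as $T(R,f)$ itself; for example, $T(\delta R,e^w)=\delta\,T(R,e^w)$. Your caveat is backwards: the rescaling is harmless precisely for very slowly growing $f$, where $T(cR,f)\sim T(R,f)$, and it fails for fast-growing $f$. The fallback of normalising $g$ by a linear change of variables is not available either. A non-unitary substitution changes $T(r,f\circ g)$ itself, and a unitary one leaves the distribution of $|P_m|$ on the sphere unchanged.

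In fact no repair of (b) exists, because the inequality as stated is false. For $n=1$, $g(z)=z/2$, $f(w)=e^w$, one has $T(r,f(g))=\frac{r}{2\pi}=\frac12\,T(r,f)$. The same happens with a top-degree coefficient of modulus one. Take $n=2$, $g(z)=z_1$, $f(w)=e^w$: your slices are $F_\xi(z)=e^{\xi_1 z}$ with $T(r,F_\xi)=|\xi_1|r/\pi$. Since $|\xi_1|^2$ is uniformly distributed on $[0,1]$ with respect to $\sigma$ on $\mathbb{C}^2\langle 1\rangle$, this gives $T(r,e^{z_1})=\frac{2r}{3\pi}=\frac23\,T(r,e^w)$. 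So the averaged bound with $c_\xi=|P_m(\xi)|$ kept inside is attained here, up to $\varepsilon$ and $\delta$. That bound, or its consequence $T(r,f(g))\ge(1-o(1))\,\sigma(S_\delta)\,T((1-\varepsilon)\delta r^m,f)$, is the correct conclusion of your method. Combined with convexity in $\log r$, it still yields $\liminf_{r\to\infty}T(r,f(g))/T(r,f)\ge m$, which is the kind of estimate the degree count in Theorem~\ref{t1.1} actually needs.

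The paper proves the lemma by a different route. It counts $c$-points of $f\circ g$ directly in $\mathbb{C}^n$ and chooses $c$ via Lemma~\ref{L1} so that $N(r,c;f)\ge(1-o(1))T(r,f)$. But it loses the same constant at its assertion that, for $|c_j|\le t^m$, the level set $\{g=c_j\}$ contributes its full multiplicity $m$ inside the ball of radius $t$. In the examples above this fails: the root $2c_j$ lies outside the disc of radius $t$, and the hyperplane $\{z_1=c_j\}$ has counting function $1-|c_j|^2/t^2<1$. So the paper's argument does not supply the missing step either.
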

	
	\begin{proof}
	Let $c\in\mathbb{C}$ be such that $f(z)-c$ has infinitely many zeros $c_1,c_2,\ldots,c_j,\ldots$. Then $g(z)=c_j$ implies that $f(g)=c$. Let
	\begin{align*}
		A=\operatorname{supp} \mu^c_{f(g)},\quad B_j=\operatorname{supp} \mu^{c_j}_{g} \quad \text{and} \quad C=\operatorname{supp} \mu^c_f, 
	\end{align*}
	where $j=1,2,\ldots$. Note that in some neighbourhood of $c_j$, we can write 
	\begin{align*}
		f(z)-c=(z-c_j)^{m_j} \phi_j(z),
		\end{align*}
		where $\phi_j(z)$ is analytic at $c_j$ and $\phi(c_j)\neq 0$ and $m_j\in\mathbb{N}$. Clearly
		\begin{align}\label{l1}
			f(g(z))-c=(g(z)-c_j)^{m_j} \phi_j(g(z)),
		\end{align}
		where $j=1,2,\ldots$.
		
		We can choose $t_0$ so large such that for $c_j\in C[t^m]$ atleast $m$ roots of $g(z)=c_j$ satisfy $B_j\left\lbrack t\right\rbrack$
		if $t>t_0$, where $j=1,2,\ldots,$. Then in view of (\ref{l1}), we obtain
		\begin{align*}
			n_{\mu^c_{f(g)}}(t)=t^{-2(n-1)} \int_{A[t]}\mu^{c}_{f(g)}\nu_n^{n-1}\geq t^{2m(n-1)}\int_{C[t^m]}m\mu^c_f\nu_n^{n-1}=n_{\mu^c_f}(t^m).
		\end{align*}
		Therefore for $r>r_0$, we have 
		\begin{align*}
			N(r,c;f)=\int\limits^r_{r_0} \frac{n_{\mu^c_{f(g)}}(t)}{t} dt\geq \int\limits^r_{r_0}\frac{m n_{\mu^c_f}(t^m)}{t} dt.
		\end{align*}
		Making the substitution of variable $s=t^m$, we obtain
		\begin{align}\label{l2}
			N(r,c;f(g))\geq \int\limits_{r_0^m}^{r^m} \frac{n_{\mu^c_f}(s)}{s} ds=N(r^m,c;f)+O(1).
		\end{align}
		According to Lemma \ref{L1}, we can actually choose $c$ such that
		\begin{align}\label{l3}
			N(r,c;f)\geq (1-o(1)) T(r,f)\quad r\to \infty.
		\end{align}
		Now from (\ref{l2}) and (\ref{l3}), we deduce that
		\begin{align*}
			T(r,f(g))\geq N(r,c;f(g))\geq (1-o(1)) T\left(r^m,f\right)\quad \text{as}\;\; r\to \infty.
		\end{align*}
		
	\end{proof}

\begin{lem}\label{L2} \cite[Lemma 1.37]{Hu-Li-Yang-2003} Let $f:\mathbb{C}^n\to\mathbb{P}^1$ be a non-constant meromorphic function and $I=(\alpha_1,\ldots,\alpha_n)\in \mathbb{Z}^n_+$ be a multi-index. Then for any $\varepsilon>0$, we have
\begin{align*}
m\left(r,\frac{\partial^I(f)}{f}\right)\leq |I|\log^+T(r,f)+|I|(1+\varepsilon)\log^+\log T(r,f)+O(1)
\end{align*}
for all large $r$ outside a set $E$ with $\int_E d\log r<\infty$.
\end{lem}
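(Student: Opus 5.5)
The statement is the logarithmic derivative lemma for meromorphic functions on $\mathbb{C}^n$, quoted from \cite[Lemma 1.37]{Hu-Li-Yang-2003}. The plan is to reduce it to the one-variable logarithmic derivative lemma by restricting $f$ to complex lines through the origin, and then to integrate over the sphere of directions.

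\textbf{Base case.} First treat $|I|=1$, so $\partial^I=\partial_{z_j}$. Write $z=r\xi$ with $\xi\in\mathbb{C}^n\langle 1\rangle$, and set $f_\xi(w)=f(w\xi)$ for $w\in\mathbb{C}$. For almost every $\xi$, $f_\xi$ is a well-defined meromorphic function on $\mathbb{C}$, since the indeterminacy set has codimension two. Two facts are then needed.
\begin{enumerate}
\item[(a)] The proximity function satisfies $m(r,\varphi)=\int_{\mathbb{C}^n\langle 1\rangle} m(r,\varphi_\xi)\,\sigma(\xi)$, and similarly $T(r,f)$ is the spherical average of $T(r,f_\xi)$ up to $O(1)$. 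This is the standard slicing formula used in \cite{Stoll-1974}, and I will take it as given.
\item[(b)] The partial derivative $\partial_{z_j}f$ is not directly a derivative along the line. So I will not slice $\partial_{z_j}f/f$ itself.
\end{enumerate}

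\textbf{Handling the transversal derivative.} The cleanest route is to slice along lines parallel to the $e_j$-direction, i.e. $w\mapsto f(z'+we_j)$, on which $\partial_{z_j}f/f$ is exactly $f'/f$. Since the ball geometry does not match this product slicing, I would instead invoke the Vitter/Biancofiore--Stoll approach.
\begin{enumerate}
\item Using the Euler-type identity, express $\partial_{z_j}$ in terms of the radial derivative $\sum_k z_k\partial_{z_k}$ and tangential derivatives.
\item Estimate the radial part through the one-variable lemma on each slice $f_\xi$, giving
\begin{align*}
m\big(r,f_\xi'/f_\xi\big)\le \log^+T(r,f_\xi)+(1+\varepsilon)\log^+\log T(r,f_\xi)+O(1).
\end{align*}
\item Average over $\xi$ and apply Jensen's inequality (concavity of $\log^+$) to pass from the averages of $\log^+T(r,f_\xi)$ to $\log^+T(r,f)$.
\item Control the exceptional $r$-set, which depends on $\xi$, by using the Borel-type growth lemma for the averaged function $T(r,f)$ directly rather than slice by slice. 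This gives an exceptional set $E$ with $\int_E d\log r<\infty$.
\end{enumerate}

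\textbf{Higher multi-indices.} For general $I$, write $\partial^I f/f$ as a telescoping product of ratios $\partial^{I_{k+1}}f/\partial^{I_k}f$, each a first-order log derivative of $\partial^{I_k}f$. Then use
\begin{align*}
T(r,\partial^{I_k}f)\le C\,T(r,f)+O(\log^+ T(r,f)),
\end{align*}
outside an exceptional set, which follows from the first-order case together with $N(r,\partial f)\le 2N(r,f)$. Summing the $|I|$ estimates yields the coefficient $|I|$, after absorbing constants into $O(1)$ and adjusting $\varepsilon$.

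\textbf{Main obstacle.} The hardest step is the passage from the partial derivative $\partial_{z_j}$ to one-dimensional derivatives while keeping the sharp constants ($1$ in front of $\log^+T$ for each order) and a uniform exceptional set. My first attempt would be Vitter's method: work with the Poisson--Jensen representation of $\log|f|$ on the ball, differentiate under the integral, and bound $m(r,\partial_{z_j}f/f)$ via $T(R,f)$ for $R>r$. Then choose $R=r+1/T(r,f)^{1+\varepsilon}$ and apply the Borel lemma.
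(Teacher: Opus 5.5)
The paper gives no proof of this lemma; it quotes Hu--Li--Yang, Lemma 1.37. Your sketch therefore has to stand on its own, and as written it does not establish the first-order case.

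\textbf{The main gap: the non-radial part of $\partial_{z_j}$.} Your Euler-type splitting reads
\begin{align*}
\frac{\partial f}{\partial z_j}=\frac{\bar z_j}{\|z\|^2}\sum_{k=1}^n z_k\frac{\partial f}{\partial z_k}+\sum_{k=1}^n\Big(\delta_{jk}-\frac{\bar z_j z_k}{\|z\|^2}\Big)\frac{\partial f}{\partial z_k}.
\end{align*}
The slices $f_\xi(w)=f(w\xi)$ see only the first term, because $f_\xi'(w)=\sum_k\xi_k\,\partial_{z_k}f(w\xi)$. The second term is the derivative in the direction $e_j-\bar z_j z/\|z\|^2$, which is Hermitian-orthogonal to $z$. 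It measures how $f_\xi$ changes as $\xi$ moves, so applying the one-variable lemma slice by slice gives no information about it. Your steps 2--4 therefore bound only $m\big(r,\sum_k z_k\partial_{z_k}f/f\big)$. The tangential term is never estimated, and it is exactly the several-variable content of the lemma. Your last paragraph names Vitter's method for it but does not carry it out. That would require a Poisson--Jensen representation of $\log|f|$ on $\mathbb{C}^n(R)$, differentiated in $z_j$, with both the kernel term and the Green potential of the divisor integrated over $\mathbb{C}^n\langle r\rangle$ and controlled by $T(R,f)$ and $R-r$.

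\textbf{The exceptional-set step gives the wrong constants.} Suppose you grant a first-order bound of Gol'dberg--Grinshtein type, $\log^+\frac{R\,T(R,f)}{r(R-r)}+O(1)$. Your choice $R=r+T(r,f)^{-1-\varepsilon}$ then gives $\log^+T(R,f)+(1+\varepsilon)\log^+T(r,f)\le(2+\varepsilon)\log^+T(r,f)+O(1)$. That has the wrong coefficient and no $\log\log$ term. For the stated form, take $R=r\exp\big((\log T(r,f))^{-1-\varepsilon}\big)$, so that $R-r\ge r(\log T(r,f))^{-1-\varepsilon}$. Then apply Borel's lemma to $s\mapsto\log T(e^s,f)$, which gives $T(R,f)\le e\,T(r,f)$ outside a set of finite logarithmic measure. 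This is also where the condition $\int_E d\log r<\infty$ comes from.

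\textbf{Smaller points.}
\begin{enumerate}
\item[(i)] $\log^+$ is not concave. Apply Jensen to $\log(1+x)$ instead, using $\log(1+x)\le\log^+x+\log 2$.
\item[(ii)] The $O(1)$ in the one-variable inequality must be uniform in $\xi$. This holds if $f(0)\neq 0,\infty$, since then $f_\xi(0)=f(0)$ for all $\xi$; otherwise translate first.
\end{enumerate}

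\textbf{What is sound.} Your plan in step 4 is the right way to avoid $\xi$-dependent exceptional sets: average first, then apply Borel's lemma to $T(r,f)$. You need the exceptional-set-free Gol'dberg--Grinshtein inequality on each slice, and with it this does prove the stated bound for the radial quotient $\sum_k z_k\partial_{z_k}f/f$. The telescoping reduction from $|I|$ to order one, using $N(r,\partial_{z_j}f)\le 2N(r,f)$, is also fine once the first-order case for $\partial_{z_j}$ itself is available.
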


\begin{lem}\label{L3} \cite[Theorem 1.26]{Hu-Li-Yang-2003} Let $f:\mathbb{C}^n\to\mathbb{P}^1$ be a non-constant meromorphic function. Assume that 
$R(z, w)=\frac{A(z, w)}{B(z, w)}$. Then
\begin{align*}
 T\left(r, R_f\right)=\max \{p, q\} T(r, f)+O\Big(\sideset{}{_{j=0}^{p}}{\sum}T(r, a_j)+\sideset{}{_{j=0}^{q}}{\sum}T(r, b_j)\Big),
 \end{align*}
for all large $r$ outside a set $E$ with $\int_E d\log r<\infty$, where $R_f(z)=R(z, f(z))$ and two coprime polynomials $A(z, w)$ and $B(z,w)$ are given
respectively $A(z,w)=\sum_{j=0}^p a_j(z)w^j$ and $B(z,w)=\sum_{j=0}^q b_j(z)w^j$.
\end{lem}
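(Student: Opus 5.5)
We establish the two inequalities
\begin{align*}
T(r,R_f)\le \max\{p,q\}T(r,f)+O(S(r)),\qquad T(r,R_f)\ge \max\{p,q\}T(r,f)-O(S(r)),
\end{align*}
where $S(r)=\sum_{j=0}^pT(r,a_j)+\sum_{j=0}^qT(r,b_j)$. We follow the classical one-variable argument of Valiron--Mohon'ko. The only several-variables inputs are three facts. First, $T(r,\cdot)$ is subadditive and submultiplicative up to $O(1)$; this holds because $m(r,\cdot)$ is an integral of $\log^+$ over $\mathbb{C}^n\langle r\rangle$ against $\sigma$, and $N(r,\cdot)$ comes from nonnegative pole divisors. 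Second, we use the identity $T(r,1/F)=T(r,F)+O(1)$ recalled above. Third, divisors of products and sums behave pointwise as in one variable, outside analytic sets of codimension $\ge 2$, which do not affect $N$.

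\textbf{Step 1 (upper bound).} If $q>p$ we replace $R$ by $1/R$, so we may assume $p\ge q$. We write $A=QB+S$ with $\deg_w Q=p-q$ and $\deg_w S<q$. The coefficients of $Q$ and $S$ are rational expressions in the $a_j,b_j$, hence have characteristic $O(S(r))$.
\begin{itemize}
\item For the polynomial part, we get $T(r,Q(z,f))\le (p-q)T(r,f)+O(S(r))$ directly from subadditivity and submultiplicativity.
\item For the proper fraction $S/B$, we argue by induction on $q$, using the Euclidean algorithm in $w$. Writing $B/S=Q_1+S_1/S$ peels off a polynomial of degree $q-\deg S$ and lowers the denominator degree, with each step controlled by $T(r,1/F)=T(r,F)+O(1)$.
\end{itemize}
Summing the degrees gives the upper bound $qT(r,f)+(p-q)T(r,f)=pT(r,f)$, up to $O(S(r))$.

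\textbf{Step 2 (lower bound, polynomial case).} Let $P(z,w)=\sum_{j=0}^p a_jw^j$ with $a_p\not\equiv0$. We treat the proximity and counting parts separately.
\begin{itemize}
\item \emph{Proximity.} On the part of $\mathbb{C}^n\langle r\rangle$ where
\begin{align*}
|f|\ge 2\max_{j<p}\left(1+|a_j/a_p|\right),
\end{align*}
we have $|P(z,f)|\ge \tfrac12|a_p||f|^p$. Elsewhere, $\log^+|f|$ is bounded by $\sum_j\log^+|a_j/a_p|+O(1)$. Integrating against $\sigma$ gives $p\,m(r,f)\le m(r,P_f)+O(S(r))$.
\item \emph{Counting.} At a generic pole of $f$ of order $k$ that is not a zero or pole of any coefficient, $P_f$ has a pole of order $pk$. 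The exceptional poles are counted by $\sum N(r,a_j)+N(r,1/a_p)=O(S(r))$, so $p\,N(r,f)\le N(r,P_f)+O(S(r))$.
\end{itemize}
Together these give the lower bound for polynomials.

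\textbf{Step 3 (lower bound, general case; the main obstacle).} The difficulty is a genuine rational $R$: the zeros of $B(z,f)$ must produce poles of $R_f$ that are not cancelled by zeros of $A(z,f)$. To handle this, I would use coprimality of $A$ and $B$ over the field of meromorphic functions. The resultant gives polynomials $U,V$ in $w$, with coefficients polynomial in the $a_j,b_j$, such that
\begin{align*}
UA+VB=\mathrm{Res}(A,B)(z)\not\equiv0 .
\end{align*}
Dividing by $B$ yields $U(z,f)R_f+V(z,f)=\mathrm{Res}$. Hence wherever $B(z,f)$ is small and $R_f$ is not large, the resultant would be forced to be small. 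This converts smallness of $B(z,f)$ into largeness of $R_f$, up to $\log^+$ of the coefficients and of $1/\mathrm{Res}$, all of which are $O(S(r))$. Combined with Step 2 applied to $B$, this gives $N(r,R_f)+m(r,R_f)\ge qT(r,f)-O(S(r))$ when $q\ge p$. The case $p>q$ then follows from $R=Q+S/B$ and Step 2 applied to $Q$. The exceptional set $E$ with $\int_E d\log r<\infty$ is not needed in this algebraic argument; it is harmless in the statement, and there it absorbs the usual comparison of $T(r,a_j)$ with $T(r,f)$.
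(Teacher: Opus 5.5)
The paper gives no proof of this lemma. It is quoted from Hu--Li--Yang (Theorem 1.26, the several-variable Valiron--Mohon'ko theorem), so your argument is a self-contained reconstruction rather than a variant of anything in the paper. Your architecture is the standard one and can be completed: division for the upper bound, a split of $\mathbb{C}^n\langle r\rangle$ plus pointwise pole orders for polynomials, and the B\'ezout identity $UA+VB=\rho:=\mathrm{Res}(A,B)\not\equiv 0$ for genuine fractions. You are also right that no exceptional set is needed, only an additive $O(1)$. However, two steps do not go through as written.

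\emph{First}, the closing reduction for $p>q$ fails. From $R_f=Q_f+S_f/B_f$, the bound $T(r,Q_f)\ge (p-q)T(r,f)-O(S(r))$ and the triangle inequality, you only obtain $T(r,R_f)\ge (p-2q)T(r,f)-O(S(r))$, because lower bounds for two summands do not add. The fix is the symmetry you already used in Step 1. We have $T(r,R_f)=T(r,1/R_f)+O(1)$, and $1/R=B/A$ is again a reduced fraction. Running Step 3 with $A$ and $B$ exchanged therefore gives $T(r,R_f)\ge T(r,1/A_f)-O(S(r))\ge p\,T(r,f)-O(S(r))$. In fact Step 3 never uses $q\ge p$, so it yields $T(r,R_f)\ge\max\{T(r,1/A_f),T(r,1/B_f)\}-O(S(r))$ with no case distinction at all.

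\emph{Second}, Step 3 is carried out only for the proximity function, and even there it needs repair.

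The proximity half, as written, has two problems.
The identity should read $U(z,f)R_f+V(z,f)=\rho/B(z,f)$, not $=\rho$. Also, $U_f$ and $V_f$ are polynomials in $f$, so you must split $\mathbb{C}^n\langle r\rangle$ into two regions. On $\{|f|\ge K\}$, with $K=2\max_{j<q}(1+|b_j/b_q|)$, one has $\log^+(1/|B_f|)\le \log^+(1/|b_q|)+\log 2$. On $\{|f|<K\}$, the quantities $\log^+|U_f|$ and $\log^+|V_f|$ are bounded by $\log^+$ of the coefficients. Without this split you lose roughly $(p+q-2)\,m(r,f)$.

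The counting half, $N(r,1/B_f)\le N(r,R_f)+O(S(r))$, is not argued at all. This is exactly where the non-cancellation of zeros of $B_f$ by zeros of $A_f$ lives. It follows from $\rho=B_f(U_fR_f+V_f)$: along each irreducible component $H$ of the zero divisor of $B_f$ one gets $\mathrm{ord}_H B_f\le \mathrm{pole}_H R_f+\mathrm{ord}^+_H\rho+\mathrm{pole}_H U_f+\mathrm{pole}_H V_f$. You then need one more observation. The pole order $\ell$ of $f$ along such an $H$ satisfies $\ell\le \beta_q^++\sum_{j<q}\beta_j^-$, where $\beta_j=\mathrm{ord}_H b_j$; otherwise $b_qf^q$ dominates and $B_f$ has a pole on $H$. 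Only with this bound are $\mathrm{pole}_H U_f$ and $\mathrm{pole}_H V_f$ controlled by coefficient multiplicities.

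The same bookkeeping is glossed over in Step 2. At an exceptional pole, the order $k$ of $f$ is not itself counted by $\sum N(r,a_j)+N(r,1/a_p)$. What holds pointwise is $pk\le \mathrm{pole}(P_f)+p(\alpha_p^++\sum_{j<p}\alpha_j^-)$, where $\alpha_j=\mathrm{ord}\,a_j$ along the component. This integrates to the bound you claim, with constant $p$.
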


\section{{Proof of Theorem \ref{t1.1}}}
\begin{proof} From (\ref{Eq}), we have
\begin{align}\label{EqT1.0}
\frac{\partial h(z)}{\partial z_i}=aG^g_h(z)+bh(z)+c
\end{align}
where $z=(z_1,z_2,\ldots,z_n)$ and $i\in\{1,2,\ldots,n\}$. 
Note that
\begin{align}\label{EqT1.1}
N\left(r,\frac{\partial h(z)}{\partial z_i}\right)\leq 2N(r,h(z)),
\end{align}
where $z=(z_1,z_2,\ldots,z_n)$ and $i\in\{1,2,\ldots,n\}$.
Now using Lemma \ref{L2} and (\ref{EqT1.1}) to (\ref{EqT1.0}), we deduce that
\begin{align}\label{EqT1.2}
T\left(r,\frac{\partial h(z)}{\partial z_i}-bh(z)\right)&=N\left(r,\frac{\partial h(z)}{\partial z_i}-bh(z)\right)+m\left(r,\frac{\partial h(z)}{\partial z_i}-hf(z)\right)\\&\leq 
2N(r,h(z))+m\left(r,\frac{\frac{\partial h(z)}{\partial z_i}-bh(z)}{h(z)}\right)+m(r,h)\nonumber\\&\leq
2T(r,h)+o(T(r,h))\nonumber
\end{align}
outside possibly a set of finite logarithmic measure, where $i\in\{1,2,\ldots,n\}$.

Therefore using Lemma \ref{L3} and (\ref{EqT1.2}) to (\ref{EqT1.0}), we obtain
\begin{align}\label{EqT1.3}
T(r,G^g_h)=T\left(r, \frac{\frac{\partial h(z)}{\partial z_i}-bh(z)}{a}-\frac{c}{a}\right)&\leq T\left(r,\frac{\partial h(z)}{\partial z_i}-bh(z)\right)+O(1)\\&\leq 
2T(r,h)+o(T(r,h))\nonumber
\end{align}
outside possibly a set of finite logarithmic measure, where $i\in\{1,2,\ldots,n\}$.

Now we consider the following two cases.\par

\medskip
{\bf Case 1.} Let $g$ be a transcendental entire function in $\mathbb{C}^n$. Now from (\ref{EqT1.0}), we have 
\begin{align}\label{EqT1.4}
\frac{\partial h(z)}{\partial z_i}-bh(z)-c=aG^g_h(z).
\end{align}

\medskip
First we suppose that $f$ is a rational function in $\mathbb{C}$. Clearly $h$ is a rational function in $\mathbb{C}^n$. Let $h=\frac{P}{Q}$, where $P$ and $Q$ are rational functions in $\mathbb{C}^n$ of degree $p$ and $q$ respectively. Then by Lemma \ref{L3}, we have 
\begin{align*}
T(r, G^g_h)=\max\{p,q\}\;T(r,g)+O(1)\neq O(\log r),
\end{align*}
which shows that $G^g_h$ is transcendental. It is easy to conclude from (\ref{EqT1.4}) that $h$ is a rational function in $\mathbb{C}^n$ and so $f$ is a rational function in $\mathbb{C}$, which is impossible.

\medskip
Next we suppose that $f$ is a transcendental meromorphic function in $\mathbb{C}$. Consequently $h$ is also a transcendental meromorphic function in $\mathbb{C}^n$. Then from (\ref{EqT1.3}), we obtain 
\begin{align*}
\limsup\limits_{r\not\in E, r\to\infty}\frac{T\left(r,G^g_h\right)}{T(r,h)}<\infty,
\end{align*}
which is impossible by Lemma \ref{Lm2}.

\medskip
{\bf Case 2.} Let $g$ be a polynomial in $\mathbb{C}^n$. Suppose $\deg(g)=m$. If $m =1$, then
the conclusions $(i)$ and $(ii)$ of Theorem \ref{t1.1} both hold. Next suppose $m\geq 2$. Then by Lemma \ref{L1.1}, we get
\begin{align}\label{EqT1.5}
T\left(r,G^g_h\right)\geq (1-\varepsilon_1)T(r^m,h)
\end{align}
for any $\varepsilon_1>0$. We know that $T(r, h)$ is a convex function of $\log r$. Thus, for large $r$, we have 
\begin{align*}
\frac{T(r,h)-T(1,h)}{\log r-\log 1}\leq \frac{T(r^m,h)-T(1,h)}{\log r^m-\log 1},
\end{align*}
which shows that 
\begin{align}\label{EqT1.6}
T(r,h)\leq \frac{1}{m}(1+\varepsilon_2)T\left(r^m,h\right)
\end{align}
for any $\varepsilon_2>0$ and large $r$. Now using (\ref{EqT1.3}), (\ref{EqT1.5}) to (\ref{EqT1.6}), we obtain
\begin{align}\label{EqT1.7}
mT(r,h)\leq \frac{1+\varepsilon_2}{1-\varepsilon_1}\left(2T(r,h)+o(T(r,h)\right)
\end{align} 
for large $r$ possibly outside a set of finite logarithmic measure. Since $m\geq 2$,  from (\ref{EqT1.7}), we deduce that $m= 2$.

We now prove that $f$ has at most one pole. For the sake of simplicity, we suppose that $f$ has two poles at $a_1$ and $a_2$ of multiplicities $m_1$ and $m_2$ respectively. Assume that 
\begin{align}\label{EqT1.8}
f(z)=\frac{A(z)}{(z-a_1)^{m_1}(z-a_2)^{m_2}},
\end{align}
where $A(z)$ is a meromorphic function, which is analytic at $a_1$, $a_2$ and $A(a_1)\neq 0$ and $A(a_2)\neq 0$. Now from (\ref{EqT1.8}), we have 
\begin{align}\label{EqT1.9}
h(z)=\frac{A(z_1+z_2+\ldots+z_n)}{(z_1+z_2+\ldots+z_n-a_1)^{m_1}(z_1+z_2+\ldots+z_n-a_2)^{m_2}},
\end{align}
where $z=(z_1,z_2,\ldots,z_n)$. Now using (\ref{EqT1.9}) to (\ref{EqT1.0}), we get
\begin{align}\label{EqT1.10}
&\frac{B_i(z_1+\ldots+z_n)}{(z_1+\ldots+z_n-a_1)^{m_1+1}(z_1+\ldots+z_n-a_2)^{m_2+1}}\\&-\frac{A(z_1+\ldots+z_n)}{(z_1+\ldots+z_n-a_1)^{m_1}(z_1+\ldots+z_n-a_2)^{m_2}}\nonumber\\&=
a\frac{A(ng(z))}{(ng(z)-a_1)^{m_1}(ng(z)-a_2)^{m_2}}+c,\nonumber
\end{align}
where $i\in\{1,2,\ldots,n\}$ and
\begin{align*}
B_i(z_1+\ldots+z_n)&=(z_1+\ldots+z_n-a_1)(z_1+\ldots+z_n-a_2)\frac{\partial A(z_1+\ldots+z_n)}{\partial z_i}\\&-m_1(z_1+\ldots+z_n-a_2)A(z_1+\ldots+z_n)\\&-m_2(z_1+\ldots+z_n-a_1)A(z_1+\ldots+z_n)
\end{align*}
for $i\in\{1,2,\ldots,n\}$. Clearly $B_i(a_1)\neq 0$ and $B_i(a_2)\neq 0$ for $i\in\{1,2,\ldots,n\}$.

Let $\hat z\in\mathbb{C}^n$ be a zero of $z_1+\ldots+z_n-a_1$. Then from (\ref{EqT1.10}), we can say that $\hat z$ is a zero of $ng(z)-a_1$ or $ng(z)-a_2$. For the sake of simplicity, we may assume that $\hat z$ is a zero of $ng(z)-a_1$. If $\hat z$ is a simple zero of $ng(z)-a_1$, then from (\ref{EqT1.10}), we get a contradiction. Hence $\hat z$ is a $ng(z)-a_1$ of multiplicity atleast $2$. Since $\deg(g)=2$. it follows that $\hat z$ is a zero $ng(z)-a_1$ of multiplicity exactly $2$. Then we can write 
\begin{align}\label{EqT1.11}
ng(z)-a_1=\tilde a\left(z_1+\ldots+z_n-a_1\right)^2
\end{align}
where $\tilde a$ is a non-zero constant in $\mathbb{C}$.

Let $\tilde z\in\mathbb{C}^n$ be a zero of $z_1+\ldots+z_n-a_2$. Certainly from (\ref{EqT1.10}), we can say that $\tilde z$ is a zero of $ng(z)-a_2$. If $\hat z$ is a simple zero of $ng(z)-a_1$, then from (\ref{EqT1.10}), we get a contradiction. Hence $\hat z$ is a $ng(z)-a_2$ of multiplicity atleast $2$. Since $\deg(g)=2$. it follows that $\hat z$ is a zero $ng(z)-a_2$ of multiplicity exactly $2$. Then we can write 
\begin{align}\label{EqT1.12}
ng(z)-a_2=\tilde b\left(z_1+\ldots+z_n-a_2\right)^2,
\end{align}
where $\tilde b$ is a non-zero constant in $\mathbb{C}$.

Now from (\ref{EqT1.11}) and (\ref{EqT1.12}), we deduce that
\begin{align*}
\tilde a \left(z_1+\ldots+z_n-a_1\right)^2+a_1\equiv \tilde b\left(z_1+\ldots+z_n-a_2\right)^2+a_2,
\end{align*}
which is impossible here. We have thus proved that $f$ has at most one pole. Therefore, we can write
\begin{align}\label{EqT1.12a}
f(z)=h_1(z)h_2(z),
\end{align}
where $h_1(z)=\frac{1}{(z-w_0)^l}$, $z\in\mathbb{C}$, $l\in\mathbb{N}\cup\{0\}$ and $h_2(z)$ is an entire function in $\mathbb{C}$ with
$h_2(w_0)\neq 0$. Consequently
\begin{align}\label{EqT1.13}
h(z)=h_1(z_1+z_2+\ldots+z_n)h_2(z_1+z_2+\ldots+z_n),
\end{align}
where $z=(z_1,z_2,\ldots,z_n)$.

Now we consider the following two sub-cases.

\medskip
{\bf Sub-case 2.1.} Let $h_2$ be a transcendental entire function in $\mathbb{C}$. Using (\ref{EqT1.13}) to (\ref{EqT1.0}), we obtain
\begin{align}\label{EqT1.13a}
&\frac{\partial h_1(z_1+\ldots+z_n)}{\partial z_i}h_2(z_1+\ldots+z_n)+h_1(z_1+\ldots+z_n) \frac{\partial h_2(z_1+\ldots+z_n)}{\partial z_i}\\&=aG^g_{h_1}(z)G^g_{h_2}(z)+bh_1(z_1+\ldots+z_n)h_2(z_1+\ldots+z_n)+c,\nonumber
\end{align}
i.e.,
\begin{align}\label{EqT1.14}
&G^g_{h_2}(z)\\&=\frac{\frac{\partial h_1(z_1+\ldots+z_n)}{\partial z_i}h_2(z_1+\ldots+z_n)+h_1(z_1+\ldots+z_n) \frac{\partial h_2(z_1+\ldots+z_n)}{\partial z_i}}{G^g_{h_1}(z)}\nonumber\\&-\frac{bh_1(z_1+\ldots+z_n)h_2(z_1+\ldots+z_n)+c}{G^g_{h_1}(z)}\nonumber\\&=
h_2(z_1+\ldots+z_n)\left(\frac{\frac{\partial h_1(z_1+\ldots+z_n)}{\partial z_i}+h_1(z_1+\ldots+z_n) \frac{\frac{\partial h_2(z_1+\ldots+z_n)}{\partial z_i}}{h_2(z_1+\ldots+z_n)}-bh_1(z_1+\ldots+z_n)}{G^g_{h_1}(z)}\right)\nonumber\\&-\frac{c}{G^g_{h_1}(z)},\nonumber
\end{align}
where $z=(z_1,z_2,\ldots,z_n)$ and $i\in\{1,2,\ldots,n\}$. Note that $g(z)$ is a polynomial in $\mathbb{C}^n$ of degree $2$ and $h_1(z_1+\ldots+z_n)=\frac{1}{(z_1+\ldots+z_n-w_0)^l}$. Now applying Lemma \ref{L2} to (\ref{EqT1.14}), we deduce that
\begin{align}\label{EqT1.15}
T\left(r,G^g_{h_2}\right)&=m\left(r,G^g_{h_2}\right)\\&\leq m\left(r,h_2(z_1+\ldots+z_n)\right)+m\left(r,\frac{\frac{\partial h_2(z_1+\ldots+z_n)}{\partial z_i}}{h_2(z_1+\ldots+z_n)}\right)+O(\log r)\nonumber\\&
\leq T(r,h_2(z_1+\ldots+z_n))+o(T(r,h_2(z_1+\ldots+z_n)))
\end{align}
possibly outside a set of finite logarithmic measure. On the other hand by Lemma \ref{L1.1}, we have
\begin{align}\label{EqT1.16}
T\left(r,G^g_{h_2}\right)\geq (1-\varepsilon_3)T\left(r^2,h_2(z_1+\ldots+z_n)\right)
\end{align}
for any $\varepsilon_3>0$ and large $r$. We know that $T(r,h_2(z_1+\ldots+z_n))$ is an increasing convex function of $\log r$ so that$T(r,h_2(z_1+\ldots+z_n))/\log r$ is finally increasing and hence
\begin{align}\label{EqT1.17}
T(r,h_2(z_1+\ldots+z_n))\leq \frac{1}{2} T\left(r^2,h_2(z_1+\ldots+z_n)\right)
\end{align}
for sufficiently large $r$. Now from (\ref{EqT1.15})-(\ref{EqT1.17}), we obtain
\begin{align*}
T(r,h_2(z_1+\ldots+z_n))\leq \frac{1}{2}\frac{1}{1-\varepsilon_3}\left(T(r,h_2(z_1+\ldots+z_n))+o(T(r,h_2(z_1+\ldots+z_n))\right),
\end{align*}
which shows that $T(r,h_2(z_1+\ldots+z_n))=o(T(r,h_2(z_1+\ldots+z_n)))$. Therefore we get a contradiction.

\medskip
{\bf Sub-case 2.2.} Let $h_2$ be a polynomial in $\mathbb{C}$. Note that $g(z)$ is a polynomial in $\mathbb{C}^n$ of degree $2$ and $h_1(z_1+\ldots+z_n)=\frac{1}{(z_1+\ldots+z_n-w_0)^l}$. Now from (\ref{EqT1.13a}), we get
\begin{align}\label{EqT1.18}
&\frac{-lh_2(z_1+\ldots+z_n)}{(z_1+\ldots+z_n-w_0)^{l+1}}+\frac{1}{(z_1+\ldots+z_n-w_0)^l}\frac{\partial h_2(z_1+\ldots+z_n)}{\partial z_i}\\&=a\frac{h_2(ng(z))}{(ng(z)-w_0)^l}+b\frac{1}{(z_1+\ldots+z_n-w_0)^l}h_2(z_1+\ldots+z_n)+c,\nonumber
\end{align}
where $z=(z_1,z_2,\ldots,z_n)$ and $i\in\{1,2,\ldots,n\}$. Let $z^0=(z^0_1,z^0_2,\ldots,z^0_n)\in\mathbb{C}^n$ be a zero of $z_1+\ldots+z_n-w_0$. Since $h_2(w_0)\neq 0$, it follows that $h_2(z^0_1+\ldots+z^0_n)=h_2(w_0)\neq 0$ and $h_2(ng(z^0))=h_2(w_0)\neq 0$.
Now from (\ref{EqT1.18}), we can say that $\hat z$ is a zero of $ng(z)-w_0$. If $z^0$ is a simple zero of $ng(z)-w_0$, then from (\ref{EqT1.18}), we get a contradiction. Hence $z^0$ is a $ng(z)-a_1$ of multiplicity atleast $2$. Since $\deg(g)=2$. it follows that $z^0$ is a zero $ng(z)-w_0$ of multiplicity exactly $2$. Then we can write 
\begin{align}\label{EqT1.19}
ng(z)-w_0=\tilde a\left(z_1+\ldots+z_n-w_0\right)^2,
\end{align}
where $\tilde a$ is a non-zero constant in $\mathbb{C}$.

Now using (\ref{EqT1.19}) to (\ref{EqT1.18}) and then comparing the order of the pole at $z^0$ in the two sides of (\ref{EqT1.18}), we deduce that $l+1=2l$, i.e., $l=1$. Therefore from (\ref{EqT1.12a}), we get
\begin{align}\label{EqT1.19a}
f(z)=\frac{h_2(z)}{z-w_0},
\end{align}
where $z\in\mathbb{C}$. Also from (\ref{EqT1.18}), we have
\begin{align}\label{EqT1.20}
&\left(-lh_2(z_1+\ldots+z_n)+(z_1+\ldots+z_n-w_0)\frac{\partial h_2(z_1+\ldots+z_n)}{\partial z_i}\right)(ng(z)-w_0)\\&=ah_2(ng(z))(z_1+\ldots+z_n-w_0)^2+b(z_1+\ldots+z_n-w_0)h_2(z_1+\ldots+z_n)(ng(z)-w_0)\nonumber\\&+c(z_1+\ldots+z_n-w_0)(ng(z)-w_0),\nonumber
\end{align}
where $z=(z_1,z_2,\ldots,z_n)$ and $i\in\{1,2,\ldots,n\}$. 

Let $\deg(h_2)=d$. If $d\geq 2$, then it is easy to check that
the left hand side of (\ref{EqT1.20}) has degree $d+2$, while the right hand side has degree $2d + 2$, which is impossible. Thus, we have that $d = 0$ or $d = 1$. Therefore from (\ref{EqT1.19a}), we can write 
\begin{align}\label{EqT1.21}
f(z)=\beta+\frac{\alpha}{z-w_0},
\end{align}
for some constants $\alpha\neq 0, \beta$, where $z\in\mathbb{C}$. Now using (\ref{EqT1.21}) to (\ref{EqT1.0}), we get 
\begin{align*}
-\frac{\alpha}{(z_1+\ldots+z_n)^2}=\frac{a\alpha}{g(z)-w_0}+\frac{b\alpha}{z_1+\ldots+z_n-w_0}+c+(a+b)\beta,
\end{align*}
which shows that
\begin{align}\label{EqT1.22}
g(z)-w_0=\frac{a\alpha(z_1+\ldots+z_n-w_0)^2}{-\alpha-b\alpha(z_1+\ldots+z_n-w_0)-((a+b)\beta+c)(z_1+\ldots+z_n-w_0)^2},
\end{align}
where $z\in\mathbb{C}^n$.

Since $\deg(g)=2$, from (\ref{EqT1.22}), we conclude that $b\alpha=(a+b)\beta+c=0$, which shows that $b=0$ and $a\beta+c=0$. 
Consequently from (\ref{EqT1.22}), we have
\begin{align*}
ng(z)=w_0-a\left(z_1+\ldots+z_n-w_0\right)^2
\end{align*}

Therefore
\begin{align*}
f(z)=\beta+\frac{\alpha}{z-w_0}\quad \text{and}\quad ng(z)=w_0-a\left(z_1+\ldots+z_n-w_0\right)^2,
\end{align*}
$b=0$ and $\alpha\neq 0, \beta$ are constants such that $a\beta+c=0$.

Conversely, if $f$ and $g$ have the forms in Theorem \ref{t1.1} $(ii)$, it is easy to check that $f$ and $g$ satisfy the given equation. This proves Theorem \ref{t1.1} $(ii)$. The proof is thus complete.
\end{proof}

\section{{Proof of Theorem \ref{t1.2}}}
\begin{proof} The proof is similar to the proof of Theorem \ref{t1.1}, using the fact that $T(r,p)= O(log r)=o(T(r,f))$ for any polynomial $p$. Consequently (\ref{EqT1.3}) still holds. If $g$ is a transcendental entire function in $\mathbb{C}^n$, then from the proof of Case 1 of Theorem \ref{t1.1}, we get a contradiction. Next we suppose $g$ is a polynomial in $\mathbb{C}^n$. Let $\deg(g)=m$. Note that $f$ is a transcendental meromorphic function in $\mathbb{C}$. If $m\geq 2$, then from the proof of Case 2 of Theorem \ref{t1.1}, we easily get a contradiction. Hence $m=1$, i.e., $g$ is linear in $\mathbb{C}^n$.
\end{proof}

\vspace{0.1in}
{\bf Compliance of Ethical Standards:}\par

{\bf Conflict of Interest.} The authors declare that there is no conflict of interest regarding the publication of this paper.\par

{\bf Data availability statement.} Data sharing not applicable to this article as no data sets were generated or analysed during the current study.

\end{document}